\documentclass[11pt]{amsart}

\usepackage{ae,aecompl}
\usepackage{esint}
\usepackage{graphicx}
\usepackage[all,cmtip]{xy}
\usepackage{amsmath, amscd}
\usepackage{booktabs}
\usepackage{amsthm}
\usepackage{amssymb}
\usepackage{amsfonts}
\usepackage{qsymbols}
\usepackage{latexsym}
\usepackage{mathrsfs}
\usepackage{cite}
\usepackage{color}
\usepackage{url}
\usepackage{enumerate}
\usepackage[english]{babel}

\usepackage{verbatim}
\usepackage[draft=false, colorlinks=true]{hyperref}

\allowdisplaybreaks

\usepackage{ae,aecompl}
\usepackage{esint}
\usepackage{graphicx}
\usepackage[all,cmtip]{xy}
\usepackage{amsmath, amscd}
\usepackage{booktabs}
\usepackage{amsthm}
\usepackage{amssymb}
\usepackage{amsfonts}
\usepackage{qsymbols}
\usepackage{latexsym}
\usepackage{mathrsfs}
\usepackage{cite}
\usepackage{color}
\usepackage{url}
\usepackage{enumerate}
\usepackage[english]{babel}

\usepackage{verbatim}
\usepackage[draft=false, colorlinks=true]{hyperref}

\allowdisplaybreaks

\usepackage{hyperref}
\usepackage{url}
\usepackage{setspace}
\usepackage{float}

\allowdisplaybreaks
\newcommand{\R}{{\mathbb R}}       % Field of real numbers

\newcommand{\Z}{{\mathbb Z}}       % Ring of integer numbers

\newtheorem{theorem}{Theorem}[section]
\newtheorem{lemma}[theorem]{Lemma}
\newtheorem{remark}[theorem]{Remark}

\newtheorem{proposition}[theorem]{Proposition}

\newtheorem*{theorem*}{Theorem}

\theoremstyle{definition}

\numberwithin{equation}{section}

\makeatletter
\DeclareRobustCommand\widecheck[1]{{\mathpalette\@widecheck{#1}}}
\def\@widecheck#1#2{%
    \setbox\z@\hbox{\m@th$#1#2$}%
    \setbox\tw@\hbox{\m@th$#1%
       \widehat{%
          \vrule\@width\z@\@height\ht\z@
          \vrule\@height\z@\@width\wd\z@}$}%
    \dp\tw@-\ht\z@
    \@tempdima\ht\z@ \advance\@tempdima2\ht\tw@ \divide\@tempdima\thr@@
    \setbox\tw@\hbox{%
       \raise\@tempdima\hbox{\scalebox{1}[-1]{\lower\@tempdima\box
\tw@}}}%
    {\ooalign{\box\tw@ \cr \box\z@}}}
\makeatother

\begin{document}
	
	\title{Helical maximal function and weighted estimates}

\author[Ghosh]{Abhishek Ghosh}

\address[Abhishek Ghosh]{Department of Mathematics, Indian Institute of Technology Madras, Chennai, 600036, India.}

\email{abhi@iitm.ac.in, abhi170791@gmail.com}

\author[Shuin]{Kalachand Shuin}

\address[Kalachand Shuin]{Department of Mathematics, Indian Institute of Science, Bengaluru-560012, India.}
\email{kalachands@iisc.ac.in, shuin.k.c@gmail.com}
\thanks{AG gratefully acknowledges the 
support by the Industrial Consultancy and Sponsored Research (IC\& SR), Indian Institute of Technology Madras for the
New Faculty Initiation Grant RF25261459MANFIG009296. KS gratefully acknowledges the support of Inspire Faculty Award of Department of Science
and Technology, Govt. of India, Registration no. IFA23-MA191.
}

	\begin{abstract}
In this article, we characterize the range of $\alpha$ for which the helical maximal function is bounded from $L^p(|x|^\alpha)$ to itself for $3<p<\infty$. Our result is optimal for $4\leq p<\infty,$ except possibly at end-points. 
	\end{abstract}

 \keywords{Power weights, Helical maximal function}
\subjclass[2010]{42B25, 42B20}
	\maketitle
	
\section{introduction}
Let $\gamma:I\to \R^3$ be a smooth non-degenerate curve, i.e. 
\begin{eqnarray}\label{nondegenerate}
    |\det(\gamma'(s),\gamma''(s),\gamma'''(s))|\geq c_0,
\end{eqnarray}
for some positive constant $c_0$.  This non-degeneracy condition is equivalent to the non-vanishing curvature and non-zero torsion of the space curve $\gamma.$ We consider the following averaging operator 
\[A_tf(x):=\int_{I} f(x-t\gamma(s)) \chi(s)~ds,\]
where $\chi$ is a bump function supported in the interior of $I$. Some well known examples are the helix $\gamma(s)=(\cos s, \sin s, s )$ on $\mathbb{R}^3,$ the moment curve $\gamma (s)=(s,s^2/2,s^3/6)$ etc.
Our main objects of interest are the associated maximal functions, namely
$$Mf(x):=\sup_{t>0}|A_{t}f(x)|,$$
the full maximal function and the lacunary maximal function $M_{\text{lac}}f(x):=\sup_{k\in\mathbb{Z}}|A_{2^{k}}f(x)|.$ Also, the local maximal operator is defined as $M_{\text{loc}}f(x)=\sup_{1\leq t\leq2}|A_{t}f(x)|.$ In recent times, the study of averaging operators and maximal functions over non-degenerate curves has gained a lot of attraction; we start with the following breakthrough result proved in \cite{BeltranAJM} and \cite{KoLeeOh}.

\medskip

\noindent \textbf{Theorem A.}\cite{BeltranAJM, KoLeeOh}
\emph{Let $\gamma:I\to \mathbb{R}^3$ be a smooth non-degenerate curve. Then the maximal function $M$ is bounded from $L^p(\mathbb{R}^3)$ to itself if and only if $p>3.$}

A previous breakthrough in this direction was by Pramanik and Seeger in \cite{PramanikAJM} where the authors proved Theorem~A for large values of $p.$  In order to provide the right context to the above result, in an influential work in \cite{SteinSph}, Stein proved that the spherical maximal operator $\mathcal{S},$ defined  by
\[
\mathcal{S}f(x)=\sup_{t>0}\biggl|\int_{{\mathbb{S}}^{n-1}}f(x-ty)\, d \sigma_{n-1}(y)\biggr|, \quad x \in \mathbb R^n,
\]
where  $d \sigma_{n-1}$ is the rotationally invariant measure on $\mathbb{S}^{n-1},$ is bounded on $L^p(\R^n),$ for $n\geq 3$ and $p>\frac{n}{n-1}$ and initiated the study of averaging operators over lower dimensional manifolds. He also showed examples showing that the result is sharp. Subsequently, in \cite{Bourgain} Bourgain settled the problem for dimension two, concluding that the circular maximal function is bounded on $L^p(\R^2)$ for $p>2,$ and maximal functions over smooth non-degenerate curves can be seen as higher dimensional analogue of the circular maximal function. Theorem A thus provides the complete counterpart of Bourgain's result on $\mathbb{R}^3.$ 

The purpose of this present work is to quantify the range of $\alpha$ for which the maximal functions over non-degenerate curves map $L^p(|x|^\alpha)$ to itself on $\mathbb{R}^3$ i.e. we address the following question:
 
 \medskip
 
 \noindent \textbf{Question A.} \emph{ Let $\gamma:I\to \mathbb{R}^3$ be as in Theorem A.  Suppose $\alpha\in \mathbb{R}$. Then what is the optimal range of $\alpha$ such that $M$ maps $L^p(|x|^\alpha)$ to itself?}

 In this paper, we provide the following answer to this question.
 
\subsection{Statement of main result}

We start with stating necessary conditions on $\alpha$ such that the averaging operators $A_t,$ and the maximal function $M$ map $L^p(|x|^\alpha)$ to itself. 

\begin{theorem}
\label{Thm:nece}
Let $\gamma: I\to \mathbb{R}^3$ be a smooth non-degenerate curve, i.e., $\gamma$ satisfies \eqref{nondegenerate}. 
\begin{enumerate}
    \item 
Let $$\int_{\R^3} |A_{t}f(x)|^p |x|^{\alpha}\leq C \int_{\R^3} |f(x)|^p |x|^{\alpha},$$
holds for all $f$ with constant independent of $f,$ then we must have $-1\leq \alpha\leq (p-1).$
\item 
Let the maximal operator $M$ maps $L^p(|x|^\alpha)$ to itself, then $\alpha$ necessarily satisfies
$$-1\leq \alpha\leq (p-3).$$
In fact, the same conclusion holds if we replace $M$ by local maximal function $M_{\text{loc}}.$
\end{enumerate}
\end{theorem}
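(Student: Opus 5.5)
The plan is to build explicit test functions adapted to the geometry of $\gamma$, and to exploit the fact that $|x|^\alpha$ is singular or degenerate only at the origin. Throughout, I may shrink the support of $\chi$ to a small interval around a point $s_0$. Since $\chi\geq 0$ may be assumed in the lower bounds, this only decreases $A_tf$ for $f\geq0$, so the counterexamples persist. I will choose $s_0$ so that $\gamma(s_0)\neq 0$ and, for part (2), so that $\gamma(s_0)$ does not lie in the osculating plane of $\gamma$ at $s_0$. This is possible after discarding finitely many degenerate configurations, since non-degeneracy prevents these conditions from holding on an interval. By scaling I take $t=1$ in part (1).

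\emph{Part (1), $\alpha\ge -1$.} Let $f=\mathbf 1_{T_\delta}$, where $T_\delta$ is the $2\delta$-neighbourhood of the arc $\{-\gamma(s)\}$ over $\operatorname{supp}\chi$. I would in fact use the arc $\{\gamma(s)\}$, matching the sign convention $x-\gamma(s)$ in $A_1$. Then $A_1f(x)\gtrsim 1$ for $|x|<\delta$, so $\|A_1f\|^p_{L^p(|x|^\alpha)}\gtrsim \int_{|x|<\delta}|x|^\alpha\,dx$. This is $+\infty$ if $\alpha\le -3$ and $\approx\delta^{3+\alpha}$ otherwise. On the other hand $T_\delta$ stays at distance $\sim 1$ from $0$, so $\|f\|^p_{L^p(|x|^\alpha)}\approx |T_\delta|\approx\delta^2$. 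Letting $\delta\to0$ forces $3+\alpha\ge 2$.

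\emph{Part (1), $\alpha\le p-1$.} I will use duality. Boundedness of $A_1$ on $L^p(|x|^\alpha)$ is equivalent to boundedness of its adjoint, which is the average over the reflected curve $-\gamma$ (also non-degenerate), on $L^{p'}(|x|^{-\alpha/(p-1)})$. Applying the previous paragraph to the adjoint with exponent $p'$ and weight exponent $-\alpha/(p-1)$ gives $-\alpha/(p-1)\ge -1$, i.e. $\alpha\le p-1$. For part (2), the lower bound $\alpha\ge-1$ follows immediately, since $M_{\mathrm{loc}}f\ge A_1 f$ for $f\ge0$.

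\emph{Part (2), $\alpha\le p-3$.} This is the main step, and I would adapt the slab example showing $p>3$ is necessary. The key geometric input is that $\gamma$ has third-order contact with its osculating plane at $s_0$: $\operatorname{dist}(\gamma(s),\Pi_{s_0})\approx|s-s_0|^3$, where $\Pi_{s_0}$ is the osculating plane. Hence an arc of length $\sim\delta^{1/3}$ lies within $\delta$ of that plane.

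Let $b=b(s_0)$ be the binormal and set $r=\delta^{1/3}$. Take
\[
f=\mathbf 1_{P},\qquad P=\{y: |y\cdot b|<C\delta,\ |y|<C r\}.
\]
Then $\|f\|^p_{L^p(|x|^\alpha)}\approx \delta\, r^{2+\alpha}=\delta^{(5+\alpha)/3}$, using $\alpha>-1$ so that the planar integral of $|y|^\alpha$ converges.

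Consider $x$ with $|x|\sim1$ for which some $t\in[1,2]$ satisfies $x-t\gamma(s_0)\in P$ and lies within $r/2$ of the origin. Because the translated and dilated curve $x-t\gamma(\cdot)$ has osculating plane parallel to $b^\perp$ at $s_0$, the arc $|s-s_0|\lesssim\delta^{1/3}$ stays inside $P$. Therefore $M_{\mathrm{loc}}f(x)\ge A_tf(x)\gtrsim\delta^{1/3}$.

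Next I estimate the set of such $x$. The condition is that $x\in t\gamma(s_0)+(P\cap B(0,r/2))$ for some $t\in[1,2]$. This is a union over $t$ of $r\times r\times\delta$ plates, translated along the segment direction $\gamma(s_0)$, which is transversal to $b^\perp$ since $\gamma(s_0)\cdot b\neq 0$. The union therefore has measure $\approx r^2=\delta^{2/3}$ and lies where $|x|\sim1$. This gives $\|M_{\mathrm{loc}}f\|^p_{L^p(|x|^\alpha)}\gtrsim \delta^{p/3}\delta^{2/3}$. Comparing the two bounds as $\delta\to0$ forces $p+2\ge 5+\alpha$, i.e. $\alpha\le p-3$.

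The main obstacle is making this geometric step rigorous, with uniform constants. First, the arc of length $\delta^{1/3}$ must stay inside the plate for all relevant $x,t$; this needs a Taylor expansion in the Frenet frame together with the control $|x-t\gamma(s_0)|<r/2$. Second, the $t$-union must genuinely have measure $\gtrsim r^2$; this uses the transversality $\gamma(s_0)\cdot b\neq0$, which is why $s_0$ is chosen as described.
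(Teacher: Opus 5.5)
Your proposal is correct, but it departs from the paper in two places: you get $\alpha\le p-1$ by duality rather than from a direct example, and Part (2) uses a different test function.

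\emph{The bound $\alpha\le p-1$.} The paper mentions duality but instead tests $A$ on $\chi_{B(0,\delta)}$. For $x$ within $\delta/2$ of the curve, an $s$-interval of length $\sim\delta$ has $x-\gamma(s)\in B(0,\delta)$. Hence $Af\gtrsim\delta$ on a set of measure $\sim\delta^2$, and $\delta^{p+2}\lesssim\delta^{\alpha+3}$. This works for every $p\ge 1$. Your duality step (dual weight $|x|^{-\alpha/(p-1)}$, adjoint averaging over $-\gamma$) needs $1<p<\infty$.

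\emph{The bound $\alpha\le p-3$.} The paper normalizes $\gamma'(s_0),\gamma''(s_0),\gamma'''(s_0)$ to $e_1,e_2,e_3$ and uses the Knapp box $\{|x_1|<\delta,\,|x_2|<\delta^2,\,|x_3|<\delta^3\}$, whose weighted norm is $\lesssim\delta^{\alpha+6}$. With $t(x)=x_3/\gamma_3(s_0)$ it gets $Mf_\delta\ge\delta$ on a $\delta\times\delta^2\times 1$ tube, so $\delta^{p+3}\lesssim\delta^{\alpha+6}$. With $\epsilon=\delta^{1/3}$, your plate is an $\epsilon\times\epsilon\times\epsilon^3$ slab along the osculating plane, a localized form of the classical slab example behind $p>3$. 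It gives $\epsilon^{p+2}\lesssim\epsilon^{5+\alpha}$, the same threshold.

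What your version buys:
\begin{enumerate}
\item You only need the binormal component of $\gamma(s)-\gamma(s_0)$ to be $O(|s-s_0|^3)$, with a crude $O(|s-s_0|)$ bound otherwise. The paper's box needs the full third-order Frenet expansion.
\item Your weighted norm is a planar integral, finite for every $\alpha>-2$. So the case $\alpha=-1$ is covered too, and you do not need $\alpha>-1$ there as you wrote.
\end{enumerate}
Both arguments rest on the same transversality: the paper's assumption $\gamma_3(s_0)>0$ in normalized coordinates is exactly your condition $\gamma(s_0)\cdot b(s_0)\neq 0$.

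Three small corrections.
\begin{enumerate}
\item In the example for $\alpha\ge-1$, your first choice was the right one: the neighbourhood of $\{-\gamma(s)\}$, which is the paper's $N(\delta)$. For $|x|<\delta$ the points $x-\gamma(s)$ lie near $-\gamma(s)$, so switching to the arc $\{\gamma(s)\}$ would break the argument.
\item The existence of $s_0$ with $\det(\gamma,\gamma',\gamma'')(s_0)\neq 0$ needs a proof; the paper simply assumes it. Suppose the determinant vanished on an interval. Differentiating gives $\det(\gamma,\gamma',\gamma''')\equiv 0$ there. Writing $\gamma=a_1\gamma'+a_2\gamma''+a_3\gamma'''$ then forces $a_3=a_2=0$. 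Next, $\gamma'=a_1'\gamma'+a_1\gamma''$ forces $a_1\equiv 0$ and $a_1'\equiv 1$, a contradiction. So the bad set is closed with empty interior, though not necessarily finite, and that is all you need.
\item In Part (2), restrict to $|(x-t\gamma(s_0))\cdot b|<C\delta/2$, so that the cubic deviation of the arc still fits inside $P$.
\end{enumerate}
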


Our second theorem concerns proving weighted estimates for the afore-mentioned range of $\alpha.$ 

\begin{proposition}\label{single}
    Let $\gamma:I\to \R^3$ be a smooth non-degenerate curve and $1<p<\infty$. Let $-1\leq \alpha\leq p-1,$ then $A_{t}$ is bounded from $L^p(|x|^{\alpha})$ itself.  
\end{proposition}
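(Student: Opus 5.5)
By dilation it suffices to treat $t=1$. Indeed, $A_tf(x)=A_1(f(t\,\cdot))(x/t)$, and the weight $|x|^\alpha$ is homogeneous, so the $L^p(|x|^\alpha)$ operator norm of $A_t$ does not depend on $t$. Write $A=A_1$, and note that $A$ is a positive operator. Since $A$ averages over a compact curve, $|x-\gamma(s)|$ and $|x|$ are comparable once $|x|\ge 2R$, where $R=\sup_I|\gamma|$. I would therefore split $\R^3$ into the far region $\{|x|\ge 2R\}$ and the near region $\{|x|<2R\}$.

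\emph{Far region.} Here $|x|^\alpha\sim |x-\gamma(s)|^\alpha$ for every $\alpha$. Since $A$ is an average against a probability-type measure, Minkowski's (or Hölder's) inequality gives
\[\int_{|x|\ge 2R}|Af|^p|x|^\alpha\lesssim \int\!\!\int |f(x-\gamma(s))|^p|x-\gamma(s)|^\alpha\chi(s)\,ds\,dx\lesssim \|f\|^p_{L^p(|x|^\alpha)}.\]
In the same way, the part of $f$ supported in $|y|\ge 3R$ only contributes to $Af$ on $|x|\ge 2R$. So we may assume $f$ is supported in $B(0,3R)$ and that $x\in B(0,2R)$.

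\emph{Near region, $\alpha\ge 0$.} On the ball the weight satisfies $|x|^\alpha\lesssim 1$. The unweighted bound $\|Af\|_{L^p}\lesssim\|f\|_{L^p}$ follows from Minkowski's inequality. This reduces matters to controlling $\|f\|_{L^p(B(0,3R))}$ by $\|f\|_{L^p(|x|^\alpha)}$, which fails for $\alpha>0$ because the weight vanishes at the origin. So we need the honest restriction $\alpha\le p-1$. Here I would use duality: the adjoint of $A$ is the average $A^*$ over the curve $-\gamma$, and the condition $\alpha\le p-1$ is equivalent to $-\alpha p'/p\ge -1$, that is, $\beta:=-\alpha/(p-1)\ge -1$. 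The required estimate for $A$ on $L^p(|x|^\alpha)$ is equivalent to the estimate for $A^*$ on $L^{p'}(|x|^{\beta})$. Hence it suffices to prove the case $-1\le\alpha\le 0$ for all exponents (applied to $-\gamma$, which is also non-degenerate), and to note that the case $\alpha\ge0$ follows by duality.

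\emph{Near region, $-1\le\alpha\le 0$ (main step).} Now the weight blows up at the origin, and the task is to show $\int_{|x|<2R}|Af|^p|x|^\alpha\lesssim\int |f|^p|x|^\alpha$. Since $|x|^\alpha\ge c$ on the support of $f$, it suffices to bound the left side by $\|f\|^p_{L^p}$. By Hölder's inequality, $|Af(x)|^p\lesssim A(|f|^p)(x)$, and so
\[\int |Af|^p|x|^\alpha\,dx\lesssim \int |f(y)|^p\, A^*(|\cdot|^\alpha\mathbf 1_{B(0,2R)})(y)\,dy .\]
The key claim is that $\sup_y\int_I |y+\gamma(s)|^{\alpha}\chi(s)\,ds<\infty$ for $\alpha>-1$. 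This holds because the curve is smooth with $\gamma'\ne0$ (by non-degeneracy), so $|y+\gamma(s)|\gtrsim |s-s_0|$ near each point $s_0$ where $-y$ is close to the curve. The integral is then dominated by $\int|s-s_0|^\alpha ds<\infty$, with only finitely many such $s_0$ because the curve has bounded multiplicity locally.

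The endpoint $\alpha=-1$ is the main obstacle, since this integral diverges logarithmically there. For it I would instead interpolate or argue directly. Decompose $|x|^{-1}$ dyadically as $\sum_j 2^{j}\mathbf 1_{|x|\sim 2^{-j}}$, and decompose $f$ dyadically as well. Then use the fact that the curve meets the ball $B(0,2^{-j})$ in arc length $\lesssim 2^{-j}$, so the portion $f_k=f\mathbf 1_{|y|\sim2^{-k}}$ contributes to $Af$ on $|x|\sim 2^{-j}$ only through a tube. Estimating the pieces by Schur's test with the weights $|x|^{-1}$ and $|y|^{-1}$ (the weight on the right side now being used genuinely, rather than bounded below) should give off-diagonal decay $2^{-|j-k|\epsilon}$ once the weight is retained on both sides, and this would close the endpoint.
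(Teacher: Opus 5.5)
\textbf{The gap is at the endpoints.} Your treatment of $-1<\alpha<p-1$ is correct and is in substance the paper's argument. The paper applies the factorization theorem with $\omega_0=|x|^{\alpha}$, $\omega_1=1$ (resp.\ $\omega_0=1$, $\omega_1=|x|^{-\alpha/(p-1)}$) together with Proposition~\ref{pointwiseestimate}. The sufficiency half of that theorem is exactly your H\"older bound $|Af|^p\lesssim A(|f|^p)$, plus the uniform bound $\sup_y\int|y+\gamma(s)|^{\alpha}\chi(s)\,ds<\infty$, plus duality. You also correctly take the adjoint to be the average over $-\gamma$, which the paper glosses over by calling $A_t$ self-adjoint.

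The problem is $\alpha=-1$, and through your duality also $\alpha=p-1$. Both are part of the statement, and your sketch for them cannot work, for three reasons.
\begin{enumerate}
\item \emph{The dyadic decomposition in $|y|$ does not isolate the difficulty.} Take $\gamma(I)$ away from the origin, e.g.\ the helix on $[1,2]$, or after the paper's reduction $c\le|\gamma|\le K$. Then every $y=x-\gamma(s)$ with $|x|$ small has $|y|\sim1$. So the pieces $f_k=f\mathbf 1_{\{|y|\sim 2^{-k}\}}$ with $k\gg1$ never reach the small-$|x|$ region, and the weight $|y|^{-1}$ is $\sim1$ exactly where it would have to help. The whole difficulty sits in the single block $k=O(1)$. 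There the bound for the annulus $|x|\sim2^{-j}$ is $\lesssim\|f\|_p^p$ for every $j$, and it is attained by a $2^{-j}$-tube around $-\gamma(I)$. Summing over $j$ reproduces the logarithm.
\item \emph{Schur's test with radial test functions fails.} If $h_1,h_2$ depend only on $|y|$ and $|x|$, then $h_1\sim1$ near $-\gamma(I)$ forces $h_2\gtrsim1$ near $0$. The other Schur condition then requires $\sup_y\int|y+\gamma(s)|^{-1}\chi(s)\,ds<\infty$, which is false.
\item \emph{Your argument never uses curvature, and the endpoint is false without it.} The only geometric fact you invoke, short arc length inside small balls, also holds for a straight segment. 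Take $\gamma(s)=(2+s,0,0)$, $\mathrm{supp}\,\chi\subset[-\frac12,\frac12]$ and $f=\mathbf 1_{N_\delta(-\gamma(I))}$. Then $\|f\|^p_{L^p(|x|^{-1})}\sim\delta^2$, while $Af=\int\chi$ on $\{|x_1|<\frac12,\ |(x_2,x_3)|<\delta\}$. Hence $\int|Af|^p|x|^{-1}\gtrsim\delta^2\log(1/\delta)$, and the endpoint fails for every $p$.
\end{enumerate}

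\textbf{What is missing.} You need to organize by $d(y)=\mathrm{dist}(y,-\gamma(I))$ rather than by $|y|$, together with the curvature estimate
\[
|\{s\in I:\ d(x-\gamma(s))<\eta\}|\lesssim\eta/|x|\quad\text{for small } |x|,
\]
after splitting $I$ into short pieces. This is where curvature enters: near $0$, at scale $|x|$, the difference set $\{\gamma(s)-\gamma(u)\}$ is a small perturbation of the cone $\{\lambda\gamma'(u)\}$. That cone is a nondegenerate surface away from its vertex because $\gamma''\nparallel\gamma'$; for a segment it collapses to a line and the estimate fails.

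With this estimate, run Schur's test with $h_1(y)=d(y)^{-a}$ and $h_2(x)=|x|^{-a}$, where $0<a<1/p'$. The two conditions are $\int d(x-\gamma(s))^{-ap'}\chi(s)\,ds\lesssim|x|^{-ap'}$, which uses $ap'<1$, and $\int|y+\gamma(s)|^{-1-ap}\chi(s)\,ds\lesssim d(y)^{-ap}$. They give $\int_{|x|<\epsilon}|Af|^p|x|^{-1}\,dx\lesssim\|f\|_p^p$. No such $a$ exists at $p=1$, consistent with the failure there. Your reductions then finish the endpoint.

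\textbf{The paper's endpoint argument does not supply this step.} Its weight $\omega_0$ equals $|x|^{-1-\theta}$ on $\{|x|\le c/2\}$ and $\big|\frac{c+K}{2}-|x|\big|^{-\theta}$ on the annulus (the paper calls the exponent $\gamma$). But $A$ applied to $|y|^{-1-\theta}\mathbf 1_{\{|y|\le c/2\}}$ grows like $\mathrm{dist}(x,\gamma(I))^{-\theta}$ near the curve. There $\omega_0$ stays bounded unless the curve lies on the sphere $|x|=\frac{c+K}{2}$, so $A\omega_0\lesssim\omega_0$ fails, e.g.\ for the helix. The auxiliary weights must be singular along the curve, not along a sphere, which is precisely the role of $d(y)$ above.
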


\begin{theorem}\label{full}
      Let $\gamma:I\to \R^3$ be a smooth non-degenerate curve. Then we have the following weighted estimates:

\begin{enumerate}
    \item
for $4\leq p<\infty$, $M$ is bounded in $L^p(|x|^{\alpha})$ if $-1< \alpha<p-3$. Moreover, the above result is sharp possibly except for $\alpha=-1$.
\item 
for $3<p<4,$ $M$ is bounded in $L^p(|x|^{\alpha})$ if $-\frac{5}{2}+\frac{6}{p}< \alpha<p-3$.
\end{enumerate}
\end{theorem}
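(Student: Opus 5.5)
The plan is to reduce the weighted estimate to unweighted $L^p$ bounds localized to dyadic annuli in the spatial variable. We decompose $\mathbb{R}^3$ into annuli $E_j=\{2^{j}\le |x|<2^{j+1}\}$ and write $f=\sum_j f\chi_{E_j}$. By dilation invariance of $M$ (and homogeneity of $|x|^\alpha$), it suffices to consider the interaction between $f$ supported on $E_k$ and the output measured on $E_j$. Two regimes arise.

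\textbf{Near-diagonal pieces.} For $|j-k|\le C$ the weight is essentially constant, equal to $2^{j\alpha}$, on both the input and the output region. The unweighted bound of Theorem A then handles these pieces for every $p>3$, and summing over $j$ with bounded overlap gives the estimate.

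\textbf{Off-diagonal pieces.} For $|j-k|>C$, the relevant averages with $x\in E_j$ and $x-t\gamma(s)\in E_k$ require $t\sim 2^{\max(j,k)}$ (after normalizing $|\gamma|\sim1$; for curves passing near the origin a harmless additional decomposition in $s$ is needed). We split into two cases.
\begin{itemize}
\item[(i)] Case $k\ll j$: the input is concentrated near the origin relative to the output. Then $Mf(x)$ for $|x|\sim 2^j$ is controlled by averages over a one-parameter family of curves of size $2^j$ passing through a ball of radius $2^k$. This yields a pointwise bound $Mf(x)\lesssim 2^{-j}\cdot$(a maximal $X$-ray/Radon type transform), since the curve $x-t\gamma$ meets $E_k$ in an arc of length $\lesssim 2^k/2^j$ in parameter. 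This is where the upper endpoint $\alpha<p-3$ emerges: the worst case is $f$ concentrated on a small ball, for which $Mf\sim 2^{-2j}$ on a set of measure $2^{3j}$ (a $1$-dimensional family of $t$ times the curve; the precise count comes from the example in Theorem~\ref{Thm:nece}). Summing $2^{j(\alpha+3-p)}$-type geometric series then converges exactly when $\alpha<p-3$. Concretely, we bound $\|M(f\chi_{E_k})\|_{L^p(E_j)}$ using $L^p\to L^p$ bounds for $M_{\text{loc}}$ rescaled, combined with Hölder in the small support. Any loss gets absorbed by the gap $p-3-\alpha>0$.
\item[(ii)] Case $k\gg j$: the output is near the origin. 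Here $t\sim2^k$ and we get $\|M(f\chi_{E_k})\|_{L^p(E_j)}\lesssim 2^{(j-k)\beta}\|f\chi_{E_k}\|_p$ with a gain $\beta$ obtained by interpolating between a trivial $L^\infty$ bound, which gives the volume gain $2^{3(j-k)/p}$, and the sharp $L^p$-improving or local smoothing estimates for $M_{\text{loc}}$ restricted to small sets. The needed condition after weighting is $\alpha>-\beta p$. For $p\ge4$ we aim for $\beta=1/p$, giving $\alpha>-1$. This uses the fact that $M_{\text{loc}}$ maps $L^p$ into functions with an $L^p$ gain on sets of measure $\delta^3$ of order $\delta^{1/p}$; such a gain follows from $L^p\to L^p_{1/p}$-type local smoothing for $p\ge4$ (Ko--Lee--Oh / Beltran et al.\ type estimates, combined with Sobolev embedding on small balls). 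For $3<p<4$ only weaker smoothing is available, and interpolation with the $p=3{+}$ and $p=4$ endpoints gives the restriction $\alpha>-\frac52+\frac6p$, which matches $-1$ at $p=4$.
\end{itemize}

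The main obstacle is step (ii): extracting the sharp smallness gain $2^{(j-k)/p}$ for the maximal operator on small balls. We plan to obtain it via Fourier decomposition $A_t=\sum_\ell A_t^\ell$ at frequency $2^\ell$, using the sharp local smoothing $\|\sup_{t}|A^\ell_t f|\|_p\lesssim 2^{-\ell\epsilon(p)}\|f\|_p$ and Bernstein on the $2^{j-k}$-ball for small $\ell$. Sharpness for $p\ge4$ follows from Theorem~\ref{Thm:nece}.
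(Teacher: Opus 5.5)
Your annular Schur-type scheme is a reasonable alternative to the paper's argument. The paper gets $0\le\alpha<p-3$ from $M|x|^{-\beta}\lesssim|x|^{-\beta}$ ($0\le\beta<1$) plus interpolation with change of measure. It gets $-1<\alpha<0$ from a frequency-localized $L^4(|x|^{-1})$ bound, Stein--Weiss, and a Littlewood--Paley local-to-global step. In your scheme the near-diagonal part and the low-frequency Bernstein part of (ii) are fine.

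\textbf{The gap is the high-frequency part of (ii), which is the heart of the theorem.} Your concrete plan bounds it by the global decay $\|\sup_{1\le t\le 2}|A^\ell_tf|\|_p\lesssim 2^{-\ell\epsilon(p)}\|f\|_p$, so it yields at most $\delta^{\epsilon(p)}$. But $\epsilon(p)\le\frac13-\frac1p<\frac1p$ for $4\le p<6$: test a $\lambda^{-1/3}\times\lambda^{-2/3}\times\lambda^{-1}$ Knapp box modulated at frequency $\lambda=2^\ell$ in the binormal direction. So at $p=4$ you get at best $\alpha>-\frac13$.

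The alternative you mention, $L^p$--$L^p$ local smoothing plus Bernstein/Sobolev on the ball, also fails. Extracting $\delta^{1/p}$ that way needs smoothing of order $\frac{5}{3p}$, while $\sigma(4,3)=\frac13<\frac5{12}$ (Theorem~\ref{LS:thm}). These routes can work for large $p$, but not at $p=4$, which is the anchor.

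What is missing is a genuinely $L^p\to L^q$, $q>p$, estimate: the $L^4\to L^6$ bound with gain $2^{-j/6}$ (Proposition~\ref{Prop3.2}). It does not follow from $L^4$ local smoothing, since Bernstein would only give $2^{-j/12}$. This gain exactly pays for the $\frac16$ derivative in $t$ at $L^6$. H\"older from $L^6$ to $L^4$ on $\{|x|<\delta\}$ then gives the $\delta^{1/4}2^{\epsilon j}$ of Lemma~\ref{deltagainfor4}. In your framework you would then balance $\delta^{1/4}2^{\epsilon\ell}$ against $2^{-\ell(\frac1{12}-\epsilon)}$ over $2^\ell\ge\delta^{-1}$ to get $\delta^{\frac14-\epsilon'}$, and reach $p>4$ by interpolating with $L^\infty$. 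The paper instead proves the endpoint $L^4(|x|^{-1})$ bound with $2^{\epsilon j}$ loss and interpolates with the unweighted decay.

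\textbf{Case (i).} H\"older on the small input support only converts the problem into $L^q\to L^p$ bounds with $q<p$, gaining $\rho^{3(\frac1q-\frac1p)}$. Reaching $\alpha<p-3$ this way would need $q\le 3$, where $M_{\text{loc}}$ is unbounded. What works is to interpolate your arc-length observation, $\|\chi_{E_j}M(\chi_{E_k}\,\cdot)\|_{L^\infty\to L^\infty}\lesssim 2^{-(j-k)}$, with Theorem~A at $p_0\downarrow 3$. This gives $2^{-(j-k)(1-p_0/p)}$ and hence $\alpha<p-p_0$; it is the paper's $\omega L^\infty$ argument in localized form. Your extremal heuristic is also off: a ball only forces $\alpha\le p-2$, and $p-3$ comes from the Knapp box of Theorem~\ref{Thm:nece}.

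\textbf{The range $3<p<4$.} You give no weighted estimate below $p=4$. Interpolating unweighted $L^{3+}$ with $L^4(|x|^{-1})$ only gives $\alpha>3-p$. The paper uses an $L^3(|x|^{-1/2})$ bound built on Lemma~\ref{deltagainfor3}.

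Note also that Stein--Weiss interpolation is linear in $(\frac1p,\frac{\alpha}{p})$. So even $L^3(|x|^{-1/2})$ and $L^4(|x|^{-1})$ give only $\alpha>1-\frac p2$, a strictly smaller range than $\alpha>-\frac52+\frac6p$ on $(3,4)$. That exponent comes from interpolating $\alpha$ itself linearly in $\frac1p$. It therefore does not follow from the interpolation you describe, nor, as written, from the paper's one-line interpolation.
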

We are able to prove the complete range of boundedness for $4\leq p<\infty$ but for the range $3<p<4$ our result is not complete. For the lacunary maximal operator $M_{\text{lac}}$ we obtain the complete range of $\alpha$ as stated in the next theorem.

\begin{theorem}\label{lacunary}
    Let $\gamma:I\to \R^3$ be a smooth non-degenerate curve. Then for $1<p<\infty$, $M_{\text{lac}}$ is bounded in $L^p(|x|^{\alpha})$ if and only if $-1\leq \alpha<p-1$. 
\end{theorem}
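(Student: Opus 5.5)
\emph{Necessity.} Since $|A_1 f|\le M_{\text{lac}}f$ pointwise, boundedness of $M_{\text{lac}}$ on $L^p(|x|^\alpha)$ gives boundedness of $A_1$. Part (1) of Theorem~\ref{Thm:nece} then yields $-1\le\alpha\le p-1$, so it remains to exclude $\alpha=p-1$. Pick a subinterval of $\operatorname{supp}\chi$ on which $|\gamma|\ge c>0$, and let $f=\mathbf 1_{B(0,1)}$, which lies in $L^p(|x|^\alpha)$ for $\alpha>-3$. For $x$ in the tube $T_k=2^k\gamma(s)+B(0,1)$ ($s$ in that subinterval) we have $|x|\sim 2^k$ and $A_{2^k}f(x)\gtrsim 2^{-k}$, while $|T_k|\sim 2^k$. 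Hence
\[
\int |M_{\text{lac}}f|^p|x|^\alpha\gtrsim \sum_{k\ge k_0}2^{-kp}\,2^k\,2^{k\alpha}=\sum_{k\ge k_0}2^{k(\alpha+1-p)},
\]
which diverges when $\alpha=p-1$. This gives the necessity of $-1\le\alpha<p-1$.

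\emph{Sufficiency.} Fix $-1\le\alpha<p-1$. Write $f=\sum_i f_i$ with $f_i=f\mathbf 1_{\{|y|\sim 2^i\}}$, and localize the output to $|x|\sim 2^j$. For each $k$ split into three regimes. If $k\le j-C$, the operator $A_{2^k}$ only sees $f_i$ with $|i-j|\le 1$, and the weight is essentially constant $2^{j\alpha}$ there. So the unweighted $L^p$ bound for $M_{\text{lac}}$ (valid for all $p>1$), applied to each annulus and summed in $j$, controls this part. If $|k-j|\le C$ and $|i-j|\le C$, only boundedly many terms remain, and Proposition~\ref{single} with a dilation argument controls them; homogeneity of $|x|^\alpha$ makes the constant scale-invariant.

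The remaining off-diagonal interactions are $k\ge j+C$ (small $|x|$ relative to the scale) and, symmetrically, pieces $f_i$ with $i\ll k$ (small $|y|$ relative to the scale, which occur when the dilated curve passes near the origin). Here I would bound the supremum by the sum $\sum_k$ and use the local estimate obtained from Hölder, $|A_1g|^p\le A_1|g|^p$, together with Fubini:
\[
\int_{|x|<r}|A_1 g|^p\,dx\lesssim r\int |g|^p\,dy ,\qquad r\le 1,
\]
since for fixed $y$ the set of $s$ with $|y+\gamma(s)|<r$ has measure $\lesssim r$ by the nonvanishing of $\gamma'$. After rescaling, the interaction of scale $2^k$ with $|x|\sim 2^j$ gains a factor $2^{(j-k)(1+\alpha)}$, which is summable over $k>j$ when $\alpha>-1$. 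The dual estimate, obtained by running the same argument for the adjoint $A_1^*$ with the dual weight $|x|^{-\alpha/(p-1)}$ when $f$ is concentrated near the origin, gains $2^{(i-k)(p-1-\alpha)}$ (up to the $p$-th power). This is summable exactly when $\alpha<p-1$, and it is here that the strict upper inequality is used. Combining the three regimes with Minkowski's inequality and Schur's test in the indices $(i,j,k)$ gives the bound.

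\emph{Main obstacle.} I expect the hardest step to be the endpoint $\alpha=-1$, where the geometric gain $2^{(j-k)(1+\alpha)}$ disappears and the crude sum over $k$ diverges logarithmically. There I would try to replace the sum by a square function: first treat the mollified part $\varphi_{2^k}*f$ via the Hardy--Littlewood maximal function, which is bounded since $|x|^{-1}\in A_p$. Then handle $(A_{2^k}-\varphi_{2^k}*)f$ through Littlewood--Paley pieces, using the Fourier decay of the curve measure (order $|\xi|^{-1/3}$) to obtain almost-orthogonality in $k$ in the weighted space, so that one gets an $\ell^2$ rather than an $\ell^1$ sum. The small-$|x|$ localization estimate above should then supply uniform bounds per piece.
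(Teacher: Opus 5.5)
\textbf{Verdict.} Your necessity argument and your sufficiency argument for $-1<\alpha<p-1$ are correct, by a route different from the paper's. There is a genuine gap at the endpoint $\alpha=-1$.

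\textbf{What you prove correctly.} The tube example ruling out $\alpha=p-1$ is a genuine addition. Theorem~\ref{Thm:nece}(1) only gives $\alpha\le p-1$, and the paper never shows that $M_{\text{lac}}$ fails at $\alpha=p-1$.

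The paper's route to sufficiency uses the pieces $A_{2^\ell}\beta_{k-\ell}(D)$ and their unweighted decay (Proposition~\ref{decay-prop}). It then runs a Duoandikoetxea--Vega bootstrap (Proposition~\ref{lac-uniform}), which upgrades uniform single-scale bounds on $L^p(\omega)$, $\omega\in\mathcal A_p$, to bounds for $M_{\text{lac}}$ on $L^p(\omega^s)$, $s<1$. This is applied with $\omega=|x|^{\alpha/s}$.

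You use only the unweighted bounds for $M_{\text{lac}}$ and $A_1$, the flat tube estimate and its dual, and homogeneity. Your route is more elementary and shows exactly where $\alpha>-1$ (small $|x|$) and $\alpha<p-1$ (small $|y|$) enter. The paper's route covers general $\mathcal A_p$ weights, but because $s<1$ it cannot reach $\alpha=-1$.

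\textbf{The gap at $\alpha=-1$.} The statement includes this endpoint, and you give only a plan for it. The plan does not work as described. The $|\xi|^{-1/3}$ decay gives orthogonality in unweighted $L^2$, and you give no way to transfer it to $L^p(|x|^{-1})$ without exactly the loss that forces $s<1$ in the paper.

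Your diagnosis is also off: the logarithm is not intrinsic to the supremum over $k$. For $2^k\ge 2^C|x|$ one has $A_{2^k}f=A_{2^k}F_k$ with $F_k=f\mathbf 1_{\{|y|\sim 2^k\}}$, and these pieces have bounded overlap. Hence
\begin{equation*}
\int_{\R^3}\sup_{k:\,2^k\ge 2^C|x|}|A_{2^k}f(x)|^p\,\frac{dx}{|x|}\le\sum_{k\in\Z}\int_{|x|\le 2^{k-C}}|A_{2^k}F_k(x)|^p\,\frac{dx}{|x|}.
\end{equation*}
The divergence comes only from applying the flat tube estimate shell by shell in $|x|\sim 2^j$.

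What is missing is a bound for each summand, i.e.\ after rescaling, $\int_{|x|<r_0}|A_1g|^p|x|^{-1}dx\lesssim\|g\|_p^p$. The flat estimate cannot give this. H\"older--Fubini with the weight $|x|^{-1}$ would need $\sup_y\int\chi(s)|y+\gamma(s)|^{-1}ds<\infty$, which fails logarithmically on $-\gamma(I)$.

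The paper supplies this input via Proposition~\ref{single} at $\alpha=-1$, followed by an asserted modification of the Duoandikoetxea--Vega argument. That proof itself needs repair: its factorization weight $\omega_0$ is not $\gtrsim A_1\omega_0$ near $\gamma(I)$ unless $|\gamma|$ is constant.

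\textbf{How to close the gap.} In your framework the single-scale bound is all you need, since your other two regimes already work at $\alpha=-1$. It follows from H\"older with the auxiliary weight $v(y)=\mathrm{dist}(y,-\gamma(I))^{\epsilon}$, $0<\epsilon<1$, together with
\begin{equation*}
\int\chi(s)\,v(x-\gamma(s))^{-1}\,ds\lesssim|x|^{-\epsilon}\quad(0<|x|<r_0),\qquad\int\chi(s)\,|y+\gamma(s)|^{-1-\epsilon(p-1)}\,ds\lesssim v(y)^{-(p-1)}.
\end{equation*}
The first inequality holds because the translate $x-\gamma(\cdot)$ crosses $-\gamma(I)$ at an angle $\sim|x|$.
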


Now for a proper perspective on weighted estimates, we mention that in the paper \cite{Lacey} Lacey proved sparse bounds for spherical maximal operators for dimension $n\geq 2$ and the sparse bounds are well-known to imply a range of weighted estimates with weights $w$ belonging to the Muckenhoupt class $\mathcal{A}_p.$ However, it was mentioned in \cite{Lacey} that that the sparse bounds fail to recover the complete range of $\alpha$ for which the spherical maximal operator maps $L^p(|x|^\alpha)$ to itself. Therefore, the study of weighted boundedness of singular maximal operators with weights of the form $|x|^\alpha,$ known as power weights, warrants a different approach. Significantly, in the context of the the spherical maximal operator, in a well-known article by Duoandikoetxea and Vega in \cite{JavierVega}, the authors obtained a characterization of $\alpha$ for which the spherical maximal operator $\mathcal{S}$ is bounded from $L^p(|x|^\alpha)$ to itself. Their result states that the spherical maximal operator maps $L^p(|x|^\alpha)$ to itself provided $p>\frac{n}{n-1}$ and $-(n-1)<\alpha<(n-1)(p-1)-1$ and their result is sharp up to the left end-point. Very recently, in \cite{LeeGeom} the author proved the end-point result that $\mathcal{S}$ is bounded on $L ^p(|x|^{1-n})$ when $p\geq 2, n\geq 3,$ and $\mathcal{S}$ is bounded on $L^p(|x|^{-1})$ if and only if $p>2, n=2.$ The reader should note that in higher dimensions the result is still open for $p$ between $\frac{n}{n-1}$ and $2.$       

Now concerning the helical maximal function, in contrast with the spherical maximal operator, the decay of the Fourier multiplier $\mu_{t}(\xi):=\int e^{-i\langle t\gamma(s), \xi)\rangle}\, ds$ is very limited, precisely $|\mu_{t}(\xi)|\lesssim (1+t|\xi|)^{-1/3},$ therefore fixed time decay estimate are merely insufficient for estimating the maximal function, and this makes the analysis for helical maximal function much more complicated. Essentially, the first remarkable result in this direction was by Pramanik and Seeger in \cite{PramanikAJM}, where the authors proved non-trivial gain over the fixed time estimates using local-smoothing estimates. Set $\Sigma_{1}=\{\xi: \langle \xi, \gamma'(s)\rangle=0\,\text{for some}\, s\in I\}$ and $\Sigma_{2}=\{\xi: \langle \xi, \gamma'(s)\rangle=\langle \xi, \gamma''(s)\rangle=0, \text{for some}\, s\in I\}.$ The analysis in \cite{PramanikAJM} is in two parts, first localizing the frequency near the cone $\Sigma_{1}$ and away from $\Sigma_{2},$ and then localizing near $\Sigma_{2}.$ In the first case, since one obtains a better decay, the analysis is like that of the circular maximal function done by Mockenhoupt--Seeger--Sogge in \cite{MSS}. To handle the most difficult part of their analysis, the authors in \cite{PramanikAJM} analyzed the binormal cone $\Sigma_{2},$ by introducing an additional localization depending on the distance from $\Sigma_{2},$ and eventually combined these estimates by decoupling inequalities. This analysis was further sharpened in \cite{BeltranAJM} where the authors used appropriate square functions, Nikodym maximal estimates and proved the sharp result Theorem~A. An independent approach was taken by Ko--Lee--Oh in \cite{KoLeeOh} who also obtained Theorem A. The same authors first obtained non-trivial $L^p$ boundedness of $M$ in dimension $n\geq4$ \cite{Forummathpi}. Very recently, in \cite{Changkeon} the authors proved the sharp local smoothing conjecture for averaging operators with non-degenerate curves in all dimensions which extends the seminal work of Guth--Wang--Zhang in \cite{Guth2020} where the authors proved the local-smoothing conjecture in dimension two. Surprisingly, the resolution of the conjecture shows that even the sharp local smoothing estimates in \cite{Changkeon} are not sufficient to prove sharp $L^p$ estimates for the maximal operator $M$ for dimension $n\geq 5.$ The proof of our main Theorem~\ref{full} uses the local smoothing estimates as well as the $L^p-L^q$ smoothing estimates from \cite{BeltranPLMS}, however we point out a key subtlety by noting that we can construct functions $f$ for which $\|M(f)\|_{L^p(|x|^{-1})}\simeq \delta^{2/p}\simeq \|f\|_{L^p(|x|^{-1})}$ for very small $\delta>0,$ and this suggests a further spatial decomposition at the scale $\delta(=2^{j)}$ should be combined with the $L^4-L^6$ local smoothing estimate from \cite{BeltranPLMS} to obtain Theorem~\ref{full}~Part (1), and this is done in Lemma~\ref{deltagainfor4}. However, for $3<p<4,$ we use the $L^p$ local smoothing from \cite{Changkeon} but we can only go up to the weight $|x|^{-\frac{5}{2}+\frac{6}{p}}$ in this case. We conclude this section by highlighting the importance of our results, namely, Theorem 1.3, in comparison with the weighted estimates obtained by sparse operator methods from \cite{BeltranPLMS} where the authors obtained sharp(except end-points) $L^p$ improving properties for the local maximal operator and as consequence, one can obtain sparse operator bounds from \cite{BeltranPLMS}.
\subsection{Comparison with previously known results}
In \cite{BeltranPLMS}, the authors mentioned that using the $L^p$ improving properties of the local maximal function  one can deduce the following theorems.
Define $\mathcal{T}'=\{(1/r,1-1/s): (1/r,1/s)\in\mathcal{T}\}$, where $\mathcal{T}$ is the triangle with vertices $(0,0), (\frac{1}{3},\frac{1}{3})$ and $(\frac{1}{4},\frac{1}{6})$.
\begin{theorem}[\cite{BeltranPLMS}]\label{sparsedomination}
    Let  $f,g\in \mathcal{S}(\mathbb{R}^3)$ and $(1/r,1/s)\in \mathcal{T}'$. Then 
    \[|\langle Mf,g\rangle |\lesssim \sum_{Q\in\mathcal{Q}}|Q|\langle f\rangle_{r,Q}\langle g\rangle_{s,Q},\]
    where $\mathcal{Q}$ is a sparse family.
\end{theorem}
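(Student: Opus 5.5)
The sparse bound in Theorem~\ref{sparsedomination} will be derived from the $L^p$-improving estimates for the local maximal operator $M_{\text{loc}}$ from \cite{BeltranPLMS}. I plan to follow the now standard scheme of Lacey \cite{Lacey}: continuity plus $L^p$ improving for single scale maximal operators yields sparse bounds for the multi-scale maximal function.

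\textbf{Step 1 (single-scale input).} Recall from \cite{BeltranPLMS} that $M_{\text{loc}}$ maps $L^r\to L^{s'}$, more precisely $\|M_{\text{loc}}f\|_{L^{q}}\lesssim\|f\|_{L^{r}}$, whenever $(1/r,1/q)$ lies in the interior of $\mathcal{T}$ (and on the segment from $(0,0)$ towards it). I will restate this for the dyadic pieces $M_jf=\sup_{2^j\le t\le 2^{j+1}}|A_tf|$ by scaling. In local form, for a cube $Q$ of side $\sim 2^j$ and $f$ supported in $3Q$,
\[
\langle M_j f, g\rangle \lesssim |Q|\,\langle f\rangle_{r,3Q}\langle g\rangle_{s,Q},\qquad 1/s=1-1/q,
\]
which is exactly the condition $(1/r,1-1/s)\in\mathcal T$.

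\textbf{Step 2 (continuity).} Next I need a continuity estimate in $L^r\to L^{q}$ for $M_{\text{loc}}f-M_{\text{loc}}(f(\cdot-h))$, of the form $\lesssim |h|^{\eta}\|f\|_{L^r}$ for some $\eta>0$ and $|h|\le1$. To get it, I will interpolate the trivial bound (at points in the interior of $\mathcal T$) with an $L^p$-Sobolev smoothing estimate. The smoothing estimate comes from the local smoothing/Littlewood--Paley decomposition in frequency used to prove the $L^p$ improving bound: each frequency piece $2^k$ gains $2^{-\epsilon k}$ for interior points. This is why the statement uses points of $\mathcal T'$ away from the boundary, with the open/closed issue handled by interior points only.

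\textbf{Step 3 (sparse recursion).} I will then run Lacey's stopping-time argument. Fix a dyadic cube $Q_0$ and $f$ supported in $Q_0$, and decompose $Mf\le\sup_j M_jf$. Select maximal subcubes where $\langle f\rangle_{r}$ or $\langle g\rangle_s$ exceeds $C$ times its value on $Q_0$; their union has measure $\le |Q_0|/2$. On the complement, the key claim is that $\sum$ over scales is controlled by $|Q_0|\langle f\rangle_{r,Q_0}\langle g\rangle_{s,Q_0}$. To prove it, I will split $f=g_0+\sum b_P$ in Calder\'on--Zygmund fashion at exponent $r$. The good part is handled by Step 1. The bad parts at scales larger than $P$ use the cancellation obtained from continuity (Step 2), which gives a geometric decay $2^{-\eta(j-\log\ell(P))}$ that sums. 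One then recurses on the stopping cubes. Because $M$ is a supremum, I will linearize it as $t(x)$ first; all bounds are uniform in the linearization.

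I expect the main obstacle to be Step 2, the continuity estimate for the supremum operator at all interior points of $\mathcal T$, including near the vertex $(1/4,1/6)$. There the $L^4\to L^6$ bound is itself obtained only through delicate local smoothing, so any $\epsilon$-loss must be absorbed. I would first try to show that the proof in \cite{BeltranPLMS} actually yields frequency-localized bounds $\|\sup_t|A_tP_kf|\|_{q}\lesssim 2^{-\epsilon k}\|f\|_r$ at interior points. Continuity then follows immediately from $\|P_k(f-f(\cdot-h))\|_r\lesssim\min(1,2^k|h|)\|f\|_r$ by summing in $k$.
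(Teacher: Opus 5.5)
Your plan is correct and follows the route the paper intends: the paper gives no proof of its own and simply attributes the sparse bound to the $L^p$-improving estimates for $M_{\text{loc}}$ in \cite{BeltranPLMS} combined with standard Lacey-type sparse machinery, which your Steps 1--3 spell out, with continuity obtained as you suggest by interpolating the $2^{\epsilon j}$-loss bound at the vertex $(1/4,1/6)$ (Proposition~\ref{Prop3.2} plus Sobolev in $t$) against the decaying diagonal bounds \eqref{decayestimate} for $p>3$, which yields $2^{-\epsilon' j}$ decay at interior points of $\mathcal{T}$. One correction to Step 3: the good part $g_0$ is controlled by the trivial bound $Mg_0\lesssim\|g_0\|_{\infty}\lesssim\langle f\rangle_{r,Q_0}$, not by Step 1, because single-scale bounds do not sum over the infinitely many scales active off the stopping cubes; and for the bad parts, at a fixed gap $m=j-\log_2\ell(P)$, you must first sum over scales $2^j$ in $\ell^{s'}$, using $s'\geq r$ and the disjointness of the stopping cubes, before summing the $2^{-\eta m}$ decay in $m$.
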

As an application of the above theorem, one can obtain the following weighted estimates.
\begin{theorem}\label{weightsfromsparse}
    Let $3<p<\infty.$ Let $\phi_{\mathrm{full}}$ be a real valued function such that $\frac{1}{\phi_{\mathrm{full}}}$ is a piecewise linear function on $[0, 1]$ whose graph connects the points $(0,1), (1/4,5/6), (1/3,2/3).$ Then we have 
    \[\Vert Mf\Vert_{L^p(\omega)}\leq C_{p, \omega} \Vert f\Vert_{L^p(\omega)}, \]
    for $\omega\in \mathcal{A}_{\frac{p}{r}}\cap RH_{\Big(\frac{\phi_{\mathrm{full}}(1/r)'}{p}\Big)'}$ and $3<r<p<\phi_{\mathrm{full}}(1/r)'.$
\end{theorem}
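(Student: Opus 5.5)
The plan is to deduce Theorem~\ref{weightsfromsparse} from the sparse domination in Theorem~\ref{sparsedomination}, using the now-standard argument for weighted bounds of sparse forms $\sum_{Q}|Q|\langle f\rangle_{r,Q}\langle g\rangle_{s,Q}$ (as in Bernicot--Frey--Petermichl and Lacey's treatment of the spherical maximal function). First fix $3<r<p$ and pick $s$ so that $(1/r,1-1/s)\in\mathcal{T}'$. By the description of $\mathcal{T}$, for a given $1/r\in(0,1/3)$ the admissible values of $1/s$ (equivalently of $1-1/s$) run up to the upper boundary of the triangle. That boundary is the segment from $(0,0)$ to $(1/4,1/6)$ and from $(1/4,1/6)$ to $(1/3,1/3)$, whose dual description is exactly the piecewise linear graph of $1/\phi_{\mathrm{full}}$ joining $(0,1),(1/4,5/6),(1/3,2/3)$. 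Thus one may take $s$ with $s'$ arbitrarily close to, but below, $\phi_{\mathrm{full}}(1/r)$, i.e.\ $s$ slightly larger than $\phi_{\mathrm{full}}(1/r)'$. The condition $p<\phi_{\mathrm{full}}(1/r)'$ is what guarantees we can choose $s<p'$ with room to spare, which is needed below.

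Next, by duality, $\|Mf\|_{L^p(\omega)}=\sup|\langle Mf, g\omega\rangle|$ over $\|g\|_{L^{p'}(\omega)}\le1$. Apply Theorem~\ref{sparsedomination} with $g$ replaced by $g\omega$ to reduce matters to bounding
\[
\sum_{Q\in\mathcal{Q}}|Q|\langle f\rangle_{r,Q}\langle g\omega\rangle_{s,Q}\lesssim \|f\|_{L^p(\omega)}\|g\|_{L^{p'}(\omega)}.
\]
This is the known weighted theory for $(r,s)$-sparse forms. Writing $\langle g\omega\rangle_{s,Q}$ and using Hölder, one sees the form is bounded provided $\omega\in\mathcal{A}_{p/r}\cap RH_{(s'/p)'}$. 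That is exactly the class in the statement once $s'$ is taken as $\phi_{\mathrm{full}}(1/r)$, after using the openness of the $A_p$ and reverse Hölder classes to absorb the slight loss in choosing $s$.

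Concretely, I would either cite the Bernicot--Frey--Petermichl proposition directly, or run the proof. For the proof, use the sparseness to replace $|Q|$ by $|E_Q|$ with disjoint $E_Q$. Then dominate $\langle f\rangle_{r,Q}$ and $\langle g\omega\rangle_{s,Q}$ by weighted averages using the $\mathcal{A}_{p/r}$ and reverse Hölder hypotheses. Finally, control the result by weighted maximal functions $M_{r,\omega}$ and $M_{s,\omega}$, which are bounded on $L^p(\omega)$ and $L^{p'}(\omega)$ since $r<p$ and $s<p'$.

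The main obstacle is the bookkeeping in matching the region $\mathcal{T}'$ to the function $\phi_{\mathrm{full}}$: the relevant edges of $\mathcal{T}'$ must correspond precisely to the stated piecewise linear graph, and the endpoint/openness issues must be handled. I would handle the latter by perturbing $(1/r,1-1/s)$ into the interior of $\mathcal{T}'$ and using the self-improvement of $\mathcal{A}_q$ and $RH$ classes.
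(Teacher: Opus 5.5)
Your plan is essentially the paper's: the paper gives no separate argument and records this theorem as a consequence of the sparse bound in Theorem~\ref{sparsedomination}, via the standard weighted theory for $(r,s)$-sparse forms (bounded on $L^p(\omega)$ for $r<p<s'$ and $\omega\in\mathcal{A}_{p/r}\cap RH_{(s'/p)'}$), with the relevant edge of $\mathcal{T}'$ being the graph of $1/\phi_{\mathrm{full}}$; your perturbation via openness of the $\mathcal{A}_q$ and $RH$ classes handles the fact that $s=\phi_{\mathrm{full}}(1/r)$ lies on that edge. Do fix the primes in your bookkeeping: the admissibility condition is $(1/r,1/s)\in\mathcal{T}'$ (equivalently $(1/r,1-1/s)\in\mathcal{T}$), and the correct matching is $s=\phi_{\mathrm{full}}(1/r)$ (or slightly larger), i.e.\ $s'=\phi_{\mathrm{full}}(1/r)'$ --- taking $s'$ near $\phi_{\mathrm{full}}(1/r)\in(1,3/2)$, as you wrote, would contradict both your own (correct) requirement $s<p'$ and the hypothesis $p>3$.
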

In the above $RH_{\rho}$ denote the reverse H\"{o}lder class of order $\rho>0$. It is also well known that $\omega\in \mathcal{A}_{p}\cap RH_{\rho}\iff \omega^{\rho}\in \mathcal{A}_{{\rho}(p-1)+1}$. Now we specialize Theorem~\ref{weightsfromsparse} for the power weights.
\begin{proposition}\label{comparison1}
    Let $4<p<\infty$. Then we have 
     \[\Vert Mf\Vert_{L^p(|x|^{\alpha})}\leq C_{\omega, p} \Vert f\Vert_{L^p(|x|^{\alpha})}, \]
     for $-1<\alpha<\max\{\frac{p-3}{2},\frac{3(p-4)}{4}\}$.
\end{proposition}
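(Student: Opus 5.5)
The plan is to plug power weights $\omega(x)=|x|^{\alpha}$ into Theorem~\ref{weightsfromsparse} and then optimize over the free exponent $r$. This uses two standard facts about power weights on $\R^3$:
\begin{enumerate}
\item $|x|^{a}\in\mathcal{A}_q$ if and only if $-3<a<3(q-1)$.
\item $|x|^{\alpha}\in\mathcal{A}_q\cap RH_\rho$ if and only if $|x|^{\alpha\rho}\in\mathcal{A}_{\rho(q-1)+1}$, which by (1) means $-3<\alpha\rho<3\rho(q-1)$.
\end{enumerate}
For $\alpha\geq 0$ the reverse H\"older condition is automatic. For $\alpha<0$ it reduces to the local integrability condition $\alpha\rho>-3$.

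Write $u=1/r\in(1/p,1/3)$ and $s=\phi_{\mathrm{full}}(u)'$, so that $1/s=1-1/\phi_{\mathrm{full}}(u)$. From the piecewise linear graph through $(0,1),(1/4,5/6),(1/3,2/3)$ one reads off
\[
\frac1s=\frac{2u}{3}\ \ (0\le u\le \tfrac14),\qquad \frac1s=2u-\frac13\ \ (\tfrac14\le u\le\tfrac13).
\]
With $q=p/r=pu$ and $\rho=(s/p)'$, so that $1/\rho=1-p/s$, the hypotheses of Theorem~\ref{weightsfromsparse} for $|x|^\alpha$ become three constraints:
\begin{enumerate}
\item the $\mathcal{A}_{p/r}$ upper bound $\alpha<3(pu-1)$;
\item the reverse H\"older lower bound $\alpha>-3/\rho=-3+3p/s$;
\item the admissibility conditions $3<r<p<s$.
\end{enumerate}
The admissible set of $\alpha$ is then the union over admissible $u$ of the intervals cut out by these constraints.

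\textbf{Lower endpoint.} On the first linear piece the lower bound is $-3+2pu$. As $u\downarrow 1/p$, which requires $1/p<1/4$, i.e.\ $p>4$, this tends to $-3+2=-1$. So every $\alpha>-1$ close to $-1$ is reached. This explains both the hypothesis $p>4$ and the left endpoint $-1$.

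\textbf{Upper endpoint.} Here one maximizes the upper bound $3(pu-1)$, or the alternative bound coming from the $RH$-transferred $\mathcal{A}$ condition, subject to the remaining constraints. The competition between the two linear pieces of $1/s$ should produce two candidates:
\begin{itemize}
\item the corner vertex $u=1/4$, which gives $3(p/4-1)=3(p-4)/4$;
\item the vertex $u=1/3$ side, which after balancing against the other constraint should give $(p-3)/2$.
\end{itemize}
Taking the better of the two yields $\max\{\frac{p-3}{2},\frac{3(p-4)}{4}\}$. For every $\alpha$ in between, I would exhibit an explicit $u$ that satisfies all constraints simultaneously, using the fact that the constraints are linear in $(u,\alpha)$ on each piece. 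Then $\alpha$ belongs to a convex region whose projection is the claimed interval.

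\textbf{Main obstacle.} The delicate step is the bookkeeping of the constraint $p<s$ (equivalently $\rho>1$) together with the upper bound. Naively, $p<s$ forces $u<3/(2p)$ on the first piece, which would cap the upper bound at a constant. So I must check carefully how the sparse form of Theorem~\ref{sparsedomination} is converted into weights: whether the weight enters through $\omega$ or through the dual weight $\omega^{1-p'}$ in the $g$-average, and which of $\mathcal{A}_{p/r}$ and $RH$ produces the binding upper bound. The first thing I would try is to redo the transfer using the dual weight $\sigma=\omega^{1-p'}$ and the condition $\sigma\in RH$. Then I would verify that the resulting constraint at the vertices $u=1/3$ and $u=1/4$ gives exactly $(p-3)/2$ and $3(p-4)/4$, respectively.
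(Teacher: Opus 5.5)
You set this up the same way the paper does: put $r=p-\epsilon$ (your $u=1/r$), split along the two linear pieces of $1/\phi_{\mathrm{full}}$, and read off $\alpha<3(p/r-1)$ from $\mathcal{A}_{p/r}$ and $\alpha>-3+3p/s$, $s=\phi_{\mathrm{full}}(1/r)'$, from the reverse H\"older condition. Your lower-endpoint argument is exactly the paper's $\epsilon\to0^+$ step.

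The gap is the upper endpoint. The two candidates are only asserted (``should give''), and the obstacle you flag cannot be removed. The condition $p<s$ is an explicit hypothesis of Theorem~\ref{weightsfromsparse}, not an artifact of how the sparse form is turned into weights. Dualizing does not enlarge anything: one checks directly that $|x|^{\alpha}\in\mathcal{A}_{p/r}\cap RH_{(s/p)'}$ and $|x|^{\alpha(1-p')}\in\mathcal{A}_{p'/s'}\cap RH_{(r'/p')'}$ both reduce to $-3(1-p/s)<\alpha<3(p/r-1)$. The $RH$-transferred $\mathcal{A}$ condition also gives no new upper bound; it is again $\alpha<3(p/r-1)$.

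Your ``naive'' bookkeeping is the correct one, and carried to the end it shows what this route can deliver.
\begin{itemize}
\item For $r\ge4$, the condition $p<3r/2$ forces $r>2p/3$. So the vertex $r=4$ is admissible only when $p<6$, and for $p\ge6$ you only get $\alpha<3(p/r-1)<\frac32$.
\item For $3<r<4$, the condition $p<\frac{3r}{6-r}$ forces $r>\frac{6p}{p+3}$. This piece is empty once $p\ge6$. For $4<p<6$, letting $r\downarrow\frac{6p}{p+3}$ (i.e.\ $u\uparrow\frac{p+3}{6p}$, not $u=\frac13$) gives $\alpha<\frac{p-3}{2}$.
\end{itemize}
Each admissible $r$ yields an interval containing $0$, so the union is $-1<\alpha<\min\{\frac{p-3}{2},\frac32\}$.

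This completes your plan for $4<p\le6$, where $\max\{\frac{p-3}{2},\frac{3(p-4)}{4}\}=\frac{p-3}{2}$. For $p>6$, however, the claimed bound $\frac{3(p-4)}{4}>\frac32$ cannot come out of Theorem~\ref{weightsfromsparse}.

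The paper's proof contains the same intermediate results. It records $\alpha<3/2$ for $p\ge6$, and its second range $p-4\le\epsilon<\frac{p(p-3)}{p+3}$ is nonempty only for $p<6$. So its final ``combining'' step does not cover $p>6$ either.

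To get the statement as written for $p>6$ you need an input beyond the sparse bound. One option is the change-of-measure interpolation at the start of Section~\ref{proof:main}, which gives $0\le\alpha<p-3$; together with $-1<\alpha\le0$ from the sparse route, this covers $(-1,\frac{3(p-4)}{4})$. Alternatively, cite Theorem~\ref{full} directly.
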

\begin{remark}\label{pgreaterthan6}
  At this point we remark that our Theorem \ref{full} implies that 
    \[\Vert Mf\Vert_{L^p(|x|^{\alpha})}\leq C_{p, \alpha} \Vert f\Vert_{L^p(|x|^{\alpha})}, \]
    for $-1<\alpha<p-3$, for $4\leq p<\infty$ which is a strict improvement over Proposition~\ref{comparison1}. 
\end{remark}

\begin{proof}[Proof of Proposition \ref{comparison1}]

Observe that
\begin{equation*}
\frac{1}{\phi_{ \mathrm{full} }(t)}=\begin{cases} 1-\frac{2t}{3}, & \textrm{ if  $ \, 0<t\leq \frac{1}{4},$}\\
\frac{4}{3}-2t, & \textrm{ if $ \, \frac{1}{4}\leq t<\frac{1}{3},$}
\end{cases}    
\end{equation*}
and its H\"older conjugate is
\begin{align*}
\frac{1}{(\phi_{ \mathrm{full} }(t))'}
=\begin{cases}
\frac{2t}{3}, & \textrm{ if  $ \, 0<t\leq \frac{1}{4},$}\\
2t-\frac{1}{3}, &   \textrm{ if $ \, \frac{1}{4}\leq t<\frac{1}{3}.$}    
\end{cases}    
\end{align*}
Let $4<p<\infty.$ Choose $r=p-\epsilon>4$ and the range of $\epsilon$ will be specified by the Theorem~\ref{weightsfromsparse}. To apply Theorem~\ref{weightsfromsparse} we must have $r<p<\phi_{\mathrm{full}}(\frac{1}{r})',$ therefore $p<\phi_{\mathrm{full}}(\frac{1}{p-\epsilon})'=\frac{3(p-\epsilon)}{2},$ which gives $0<\epsilon<\frac{p}{3}.$ Consequently, $0<\epsilon<\min\{p/3, p-4\}.$
\begin{comment}
 Since $p>4$, we choose $r<p$ with $(\frac{1}{r},\frac{1}{\phi_{full}(\frac{1}{r})})\in \Bar{\mathcal{T}'}$ such that $r<p<\phi'_{full}(\frac{1}{r})$. Therefore, we consider $r=p-\epsilon$ $(\epsilon>0)$ such that $p-\epsilon<\phi'_{full}(\frac{1}{p-\epsilon})$. Now we have two choices for $\phi_{full}(\frac{1}{r})$. When $(\frac{1}{r},\frac{1}{\phi_{full}(\frac{1}{r})})$ lies on the line segment joining $(0,1)$ and $(\frac{1}{4},\frac{5}{6})$ vertices, we get $0<\epsilon<p-4$ and
    \begin{eqnarray*}
        \frac{1}{\phi_{full}(1/r)}=1-\frac{2}{3r}
        \implies\phi'_{full}(\frac{1}{r})=\frac{3r}{2}=\frac{3(p-\epsilon)}{2}.
    \end{eqnarray*}
    Note that Theorem \ref{weightsfromsparse} requires $p<\phi'_{full}(\frac{1}{r})=\frac{3(p-\epsilon)}{2}$, i.e. $\epsilon<p/3$. So the choices for $\epsilon$ is .  
\end{comment}
Now by Theorem  \ref{weightsfromsparse} we have
    \begin{eqnarray*}
        &&|x|^{\alpha}\in \mathcal{A}_{\frac{p}{p-\epsilon}}\cap RH_{\Big(\frac{\phi_{\mathrm{full}}(\frac{1}{r})'}{p}\Big)'}.
        \end{eqnarray*}
        The above condition will imply the following range of $\alpha$,
        \begin{eqnarray*}
        -\frac{p-3\epsilon}{p-\epsilon}<\alpha<\frac{3\epsilon}{p-\epsilon}.
    \end{eqnarray*}
    Taking $\epsilon\to 0^{+}$ we get $-1<\alpha\leq0.$ For $p\geq 6,$ we can take $\epsilon\to (p/3)^{-}$ to obtain that $\alpha<3/2.$ For $4<p<6,$ we can take $\epsilon\to (p-4)^{-}$ thus yielding $\alpha<\frac{3(p-4)}{4}.$

 Again by choosing $3<r=p-\epsilon<4<p,$ we obtain the permissible range of $\epsilon$ to be $p-4\leq \epsilon<p-3.$ Moreover, the condition $p<\phi_{\mathrm{full}}(\frac{1}{r})'$ in this range implies that $p<(\frac{2}{r}-\frac{1}{3})^{-1}=\frac{3(p-\epsilon)}{6-(p-\epsilon)},$ that is, $\epsilon<\frac{p(p-3)}{p+3}.$ Now Theorem \ref{weightsfromsparse} implies that
     \begin{eqnarray*}
        |x|^{\alpha}\in \mathcal{A}_{\frac{p}{p-\epsilon}}\cap RH_{\Big(\frac{\phi_{\text{full}}(\frac{1}{r})'}{p}\Big)'}.
          \end{eqnarray*}
Consequently, $\alpha$ satisfies
          \begin{eqnarray*}
           -\frac{(p+3)(p-\epsilon)-6p}{p-\epsilon}<\alpha< \frac{3\epsilon}{p-\epsilon}\label{alpharange}.
     \end{eqnarray*}
     Letting $\epsilon\to (p-4)^{+}$ we get $-(3-\frac{p}{2})<\alpha<\frac{3(p-4)}{4},$ and by letting $\epsilon\to (\frac{p(p-3)}{p+3})^{-}$ we obtain the range $0\leq \alpha<\frac{p-3}{2}$. Hence, combining the above results, we get $-1<\alpha<\max\{\frac{p-3}{2},\frac{3(p-4)}{4}\}.$
\end{proof}
Now we focus on showing the comparison for the range $3<p<4.$ We record the following range of $\alpha$ that one can obtain from Theorem~\ref{weightsfromsparse}.

\begin{proposition}\label{comparison2}
     Let $3<p\leq4$. Then we have 
     \[\Vert Mf\Vert_{L^p(|x|^{\alpha})}\leq C_{\omega, p}\, \Vert f\Vert_{L^p(|x|^{\alpha})}, \]
     for $3-p<\alpha<\frac{p-3}{2}$.
\end{proposition}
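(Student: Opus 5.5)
The plan is to repeat the argument used for Proposition~\ref{comparison1}, now in the regime where the whole admissible range of $r$ lies on the second linear piece of $1/\phi_{\mathrm{full}}$. Fix $3<p\le 4$ and set $r=p-\epsilon$, so that $3<r<p\le 4$, i.e. $0<\epsilon<p-3$. Then $\frac1r\in(\frac14,\frac13)$, and the formula recorded in the proof of Proposition~\ref{comparison1} gives
\[
\phi_{\mathrm{full}}(1/r)'=\Big(\frac2r-\frac13\Big)^{-1}=\frac{3r}{6-r}.
\]
The requirement $r<p<\phi_{\mathrm{full}}(1/r)'$ of Theorem~\ref{weightsfromsparse} becomes $p(6-r)<3r$, i.e. $r>\frac{6p}{p+3}$. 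Equivalently,
\[
0<\epsilon<\frac{p(p-3)}{p+3}.
\]
This is automatically smaller than $p-3$, so it is the only constraint on $\epsilon$.

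Next, for each admissible $\epsilon$ we translate the condition $|x|^\alpha\in\mathcal A_{p/r}\cap RH_{\sigma}$ into a condition on $\alpha$, where $\sigma=\big(\phi_{\mathrm{full}}(1/r)'/p\big)'$. We use the standard facts on $\mathbb R^3$:
\begin{itemize}
\item $|x|^\alpha\in\mathcal A_q$ if and only if $-3<\alpha<3(q-1)$;
\item $|x|^\alpha\in RH_\sigma$ if and only if $\alpha\sigma>-3$, equivalently via $\omega^\sigma\in\mathcal A_{\sigma(q-1)+1}$.
\end{itemize}
With $q=p/r$, the $\mathcal A_q$ condition gives the upper bound $\alpha<\frac{3\epsilon}{p-\epsilon}$. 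Since $1/\sigma=1-p/\phi_{\mathrm{full}}(1/r)'=1-\frac{p(6-r)}{3r}$, the reverse H\"older condition gives the lower bound
\[
\alpha>-\frac{(p+3)(p-\epsilon)-6p}{p-\epsilon}.
\]
This lower bound is stronger than $-3$, so it is the binding one. Thus for each admissible $\epsilon$ we obtain boundedness for
\[
L(\epsilon):=-\frac{(p+3)(p-\epsilon)-6p}{p-\epsilon}<\alpha<\frac{3\epsilon}{p-\epsilon}=:U(\epsilon).
\]

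Finally we take the union over $\epsilon\in\big(0,\frac{p(p-3)}{p+3}\big)$. Both $L$ and $U$ are continuous and increasing in $\epsilon$, and $L(\epsilon)<U(\epsilon)$ throughout. Their endpoint values are:
\begin{itemize}
\item as $\epsilon\to0^+$, $L\to 3-p$ and $U\to0$;
\item as $\epsilon\to\big(\frac{p(p-3)}{p+3}\big)^-$, we have $r\to\frac{6p}{p+3}$, so $L\to0$ and $U\to\frac{3p(p-3)/(p+3)}{6p/(p+3)}=\frac{p-3}{2}$.
\end{itemize}
Since the intervals $(L(\epsilon),U(\epsilon))$ move continuously and monotonically, their union is exactly $(3-p,\frac{p-3}{2})$, which is the claimed range.

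The main point needing care is the verification of the weight characterizations for $|x|^\alpha$ in the reverse H\"older and $\mathcal A_q$ classes. In particular, the reverse H\"older condition supplies the lower bound, while the $\mathcal A_{p/r}$ lower bound $-3$ is never active. One must also check that no admissible $r$ lands on the other linear piece of $1/\phi_{\mathrm{full}}$. This holds since $r<p\le4$ forces $\frac1r>\frac14$; at $p=4$ the boundary value $t=\frac14$ is harmless because the two formulas agree there.
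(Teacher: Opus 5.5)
Your proposal is correct and follows the paper's proof essentially step by step. You make the same choice $r=p-\epsilon$ on the second linear piece of $1/\phi_{\mathrm{full}}$, obtain the same constraint $0<\epsilon<\frac{p(p-3)}{p+3}$, derive the same window $L(\epsilon)<\alpha<U(\epsilon)$ from $\mathcal{A}_{p/r}\cap RH_\sigma$, and take the same endpoint limits. Your continuity and monotonicity argument for the union of these windows (using $L<U\iff r>3$) simply makes explicit the ``combining'' step that the paper leaves implicit, and you also cover $p=4$, which the paper's written proof nominally excludes.
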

As before, the Theorem \ref{full} implies that 
    \[\Vert Mf\Vert_{L^p(|x|^{\alpha})}\leq C_{\omega, p} \Vert f\Vert_{L^p(|x|^{\alpha})}, \]
    for $\frac{6}{p}-\frac{5}{2}<\alpha<p-3$, for $3<p\leq 4,$ which is a strict improvement over Proposition~\ref{comparison2}.
\begin{proof}[Proof of Proposition \ref{comparison2}]
   Let $3<p<4.$ Consider $r=p-\epsilon$, where   $\epsilon>0$ will be chosen to satisfy $r<p<\phi_{\mathrm{full}}(1/r)'$. Therefore, we have 
    \begin{eqnarray*}
        \frac{1}{(\phi_{\text{full}}(1/r))'}=\frac{6-(p-\epsilon)}{3(p-\epsilon)}.
    \end{eqnarray*}
    Further, $\epsilon$ has to satisfy the following inequality
    \begin{eqnarray*}
        &&p-\epsilon<p<\frac{3(p-\epsilon)}{6-(p-\epsilon)}\\
        \implies&& 6p-p(p-\epsilon)<3(p-\epsilon)\\
        \implies&& \epsilon<p-\frac{6p}{p+3}.
    \end{eqnarray*}
    Therefore, we have $0<\epsilon<\frac{p(p-3)}{p+3}$. 
    Theorem \ref{weightsfromsparse} implies that $M$ maps $L^p(|x|^{\alpha})$ to itself for $|x|^{\alpha}\in \mathcal{A}_{\frac{p}{p-\epsilon}}\cap RH_{(\frac{3(p-\epsilon)}{p(6-(p-\epsilon))})'}.$ Now, using the properties of Reverse H\"{o}lder class  $RH_{\rho}$ and $\mathcal{A}_p$ class we get the following range of $\alpha$ 
    \begin{eqnarray*}
        -\frac{(p+3)(p-\epsilon)-6p}{p-\epsilon}<\alpha<\frac{3\epsilon}{p-\epsilon}.
    \end{eqnarray*}
    Taking $\epsilon\to 0^{+}$ we get 
    \begin{eqnarray*}
        &&-\frac{(p+3)p-6p}{p-\epsilon}<\alpha\leq 0\\
        \implies&& -(p-3)<\alpha\leq0.
    \end{eqnarray*}
    On the other hand taking $\epsilon\to (\frac{p(p-3)}{p+3})^{-}$ we get 
    \begin{eqnarray*}
        &&0\leq \alpha<\frac{3p(p-3)/(p+3)}{p-\frac{p(p-3)}{p+3}}\\
        \implies&& 0\leq \alpha<\frac{p-3}{2}.
    \end{eqnarray*}
    Therefore, combining the above two estimates we get 
    $3-p<\alpha<\frac{p-3}{2}$. This completes the proof of Proposition~\eqref{comparison2}.
\end{proof}
We believe the above comparison shows the novelty of our results. In the Section~\ref{Necessary}, we prove our necessary conditions. In Section~\ref{proof:main} we prove Theorem~\ref{full}  and Theorem~\ref{lacunary}.         
\section{Necessary conditions}
\label{Necessary}
In this section, we prove several necessary conditions on $\alpha$ for the boundedness of the averaging operators $A_t$ and for the maximal operator $M$ on $L^p(|\cdot|^{\alpha}).$ 

\begin{proof}[Proof of Theorem~\ref{Thm:nece}, Part(1)] We will write the proof for $t=1,$ however it can be easily modified for other values of $t$ and the averaging operator $A_1$ will simply be written as $A.$ Let $\delta$ be a small positive number and consider the set $$N(\delta):=\{x\in\R^3: |x+\gamma(s)|<\delta\, \text{for some}\, s\in I\}.$$ 
Then observe that for any $x\in N(\delta),$ $|x|\geq |\gamma(s)|-|x-\gamma(s)|,$ concluding $|x|\gtrsim 1,$ and hence $\|\chi_{N_{\delta}}\|_{L^p(|x|^\alpha)}^p\simeq |N(\delta)|\simeq \delta^2.$ On the other hand, for all $x$ satisfying $|x|\leq \delta,$ $x-\gamma(s)\in N(\delta)$ for all $s\in I,$ therefore,
$$A(\chi_{N(\delta)})(x)=\int \chi_{N(\delta)}(x-\gamma(s))\chi(s)\, ds\gtrsim 1.$$
Now we conclude that $$\delta^{\alpha}\delta^{3}\lesssim \int_{|x|<\delta} |x|^{\alpha}\lesssim \int_{\R^3} |Af(x)|^p |x|^{\alpha} \leq C \int_{\R^3} |f(x)|^p |x|^{\alpha}\simeq \delta^2,$$
and letting $\delta\to 0,$ we obtain that $\alpha+1\geq 0\iff \alpha\geq -1.$ 

For the other side of the inequality one can simply use duality, however, we prefer to present a direct example. We will test the inequality on $f=\chi_{B(0, \delta)}.$ Take $x\in N(\frac{\delta}{2}),$ then there is a $s_{0}\in I$ such that $|x+\gamma(s_0)|<\frac{\delta}{2},$ hence for all $|s-s_{0}|<\frac{\delta}{2},$ using the mean value theorem, we have $|x-\gamma(s)|\leq |x-\gamma(s_0)|+|\gamma(s)-\gamma(s_0)|\lesssim \frac{\delta}{2}+O(|s-s_{0}|)\lesssim \delta.$ Then for $x\in N(\frac{\delta}{2}),$ we have 
$$A f(x)\gtrsim \int_{|s-s_{0}|\leq \delta} f(x-\gamma(s))\chi(s)\, ds\gtrsim \delta.$$ Therefore, 
\begin{eqnarray*}
 \int_{N(\frac{\delta}{2})} |A f(x)|^p |x|^{\alpha} ~dx \gtrsim \delta^p    \int_{N(\frac{\delta}{2})} |x|^{\alpha} ~dx\gtrsim \delta^{p+2}. 
\end{eqnarray*}
Now, from the following inequality 
\[ \delta^{p+2} \lesssim \int |A^{\gamma}f(x)|^p |x|^{\alpha}~dx\lesssim \int |f(x)|^p |x|^{\alpha} ~dx\lesssim \delta^{\alpha+3},\]
letting $\delta\to 0$ we get $\alpha\leq p-1$.
\end{proof}

\begin{proof}[Proof of Theorem~\ref{Thm:nece}, Part(2)] We use a Knapp-type example to conclude the result. From the nondegeneracy condition and using an affine transformation, we may assume that $\gamma^1(s_0)=e_{1},$ $\gamma^2(s_0)=e_{2},$ and $\gamma^3(s_0)=e_{3}$ for some $s_{0}\in I.$ Moreover, we can assume $c:=\gamma_{3}(s_0)>0.$ Set $\delta>0.$ We can define the test function as $f_{\delta}:=\chi_{\{|x_{1}|<\delta,\, |x_2|<\delta^2,\, |x_3|<\delta^3\}},$ then $$\|f_{\delta}\|_{L^p(|x|^{\alpha})}^{p}\lesssim \delta^{\alpha+6}.$$
We begin with computing the maximal function $M^{\gamma}f_{\delta}(x)$ for $x\in P_{\delta},$ where $$P_{\delta}:=\big\{x=(x_1, x_2, x_3): \big|c x_1-x_{3}\gamma_{1}(s_0)\big|<c\delta,\, \big|c x_2-x_{3}\gamma_{2}(s_0)\big|<c\delta^2,\, x_{3}\sim c \big\}.$$ Naturally, $M^{\gamma}f_{\delta}(x)\geq A_{t(x)}^{\gamma} f_{\delta}(x),$ where $t(x):=\frac{x_{3}}{c}\sim 1,$ this in turn implies that 
\begin{align*}
\big|x_{1}-t(x)\gamma_{1}(s)\big|&\lesssim \big|x_{1}-t(x)\gamma_{1}(s_0)\big|+ \big|t(x)\gamma_{1}(s_0)-t(x)\gamma_{1}(s)\big|\\
&\lesssim \delta+ \big|\gamma(s_0)-\gamma_{1}(s)\big|,  
\end{align*}
and by similar computations $\big|x_{2}-t(x)\gamma_{2}(s)\big|\lesssim \delta^2+\big|\gamma(s_0)-\gamma_{1}(s)\big|$ and $\big|x_{3}-t(x)\gamma_{3}(s)\big|\lesssim \big|\gamma_{3}(s_0)-\gamma_{3}(s)\big|$ holds true. Since $\gamma^1(s_0)=e_{1},$ $\gamma^2(s_0)=e_{2},$ and $\gamma^3(s_0)=e_{3},$ an easy application of the Taylor's theorem concludes that 
\begin{align*}
&\gamma(s)-\gamma(s_0)\\
&=\gamma^{1}(s_{0})(s-s_0)+\frac{\gamma^{2}(s_{0})}{2}(s-s_0)^2+\frac{\gamma^3(s_0)}{6}(s-s_0)^3+O(|s-s_0|^4)\\
&=\Big((s-s_0), \frac{(s-s_0)^2}{2}, \frac{(s-s_0)^3}{6}\Big)+O(|s-s_0|^4).
\end{align*}
Therefore, for $|s-s_0|<\delta,$  $|x_{1}-t(x)\gamma_{1}(s)|\lesssim \delta,$ $|x_{2}-t(x)\gamma_{2}(s)|\lesssim \delta^2,$ and $|x_{3}-t(x)\gamma_{3}(s)|\lesssim \delta^3,$ consequently, $f_{\delta}(x-t(x)\gamma(s))=1.$ The above discussion shows that  
\begin{align}
M^{\gamma}f_{\delta}(x)\geq A_{t(x)}^{\gamma} f_{\delta}(x)\geq \int_{|s-s_0|\leq \delta} f_{\delta}(x-t(x)\gamma(s))\chi(s)\, ds\geq \delta,    
\end{align}
for all $x\in P_{\delta}.$ Hence,
\begin{align}
\delta^p\delta^3\simeq \delta^p |P_{\delta}|\lesssim \int |M^\gamma f_{\delta}(x)|^p\, |x|^{\alpha}\lesssim \|f_\delta\|_{L^p(|x|^\alpha)}^p\lesssim \delta^{\alpha+6},    
\end{align}
now letting $\delta\to 0,$ we get $\alpha\leq p-3.$
\end{proof}

The following proposition characterizes $\mathcal{A}_1$ weights for the averaging operators $A_t.$ It also plays a pervasive role in our analysis.  
\begin{proposition}\label{pointwiseestimate}
Let $\gamma$ be a non-degenerate curve satisfying \eqref{nondegenerate} and $\omega_{\alpha}(x)=|x|^{\alpha}.$
Then
$$A_{t}\,\omega_{\alpha}(x)\leq c\, \omega_{\alpha}(x),\quad \text{a.e.},$$
uniformly in $t,$ if and only if $-1< \alpha\leq 0.$
\end{proposition}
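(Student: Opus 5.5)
By the dilation identity $A_t\omega_\alpha(x)=t^{\alpha}(A_1\omega_\alpha)(x/t)$ and the homogeneity $\omega_\alpha(x)=t^{\alpha}\omega_\alpha(x/t)$, the estimate uniform in $t$ is equivalent to the single estimate $A_1\omega_\alpha(x)\le c|x|^\alpha$. So we work with $A=A_1$ throughout:
\[A\omega_\alpha(x)=\int_I |x-\gamma(s)|^{\alpha}\chi(s)\,ds .\]
The proof splits into sufficiency for $-1<\alpha\le 0$ and necessity, treated by three regimes of $x$.

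\emph{Sufficiency.} Let $-1<\alpha\le0$.
\begin{enumerate}
\item For $|x|\ge C_0$ with $C_0\ge 2\sup_I|\gamma|$, we have $|x-\gamma(s)|\simeq |x|$, so $A\omega_\alpha(x)\simeq |x|^\alpha$.
\item For $|x|\le C_0$, we have $|x|^\alpha\gtrsim 1$ because $\alpha\le 0$. It therefore suffices to show $\sup_x\int_I|x-\gamma(s)|^\alpha\,ds<\infty$.
\item For this we use only that $\gamma$ is smooth with $\gamma'\neq0$, which follows from \eqref{nondegenerate}. Let $s_*$ minimize $|x-\gamma(s)|$. Since $|\gamma'(s)|\ge c>0$ and the curve is compactly parametrized, a transversality/Taylor argument gives $|x-\gamma(s)|\gtrsim |s-s_*|$ for $s$ near $s_*$ (uniformly in $x$). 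Away from $s_*$ the same lower bound holds by compactness and injectivity of $\gamma$ on $I$; we may shrink $I$ via a partition of unity if needed.
\item Hence $\int_I|x-\gamma(s)|^\alpha\,ds\lesssim\int_{-1}^1|u|^\alpha\,du<\infty$, precisely because $\alpha>-1$.
\end{enumerate}

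\emph{Necessity.}
\begin{enumerate}
\item If $\alpha\le-1$, take $x=\gamma(s_0)$ with $\chi(s_0)\neq0$. Then $|x-\gamma(s)|\lesssim|s-s_0|$, so the integral diverges. Points $x$ in a small ball around $\gamma(s_0)$ have $|x|\sim1$ and give arbitrarily large $A\omega_\alpha(x)$, by monotone convergence as $x\to\gamma(s_0)$. This contradicts $A\omega_\alpha\le c|x|^\alpha\sim c$ on a set of positive measure.
\item If $\alpha>0$, take $x\to0$. Then $A\omega_\alpha(x)\to\int|\gamma(s)|^\alpha\chi(s)\,ds>0$, while $|x|^\alpha\to0$. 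This contradicts the a.e. bound on a small ball around $0$; it uses that $\gamma$ does not pass through the origin on $\operatorname{supp}\chi$.
\end{enumerate}

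The main obstacle is the uniform lower bound $|x-\gamma(s)|\gtrsim|s-s_*|$ in step 3 of sufficiency, for $x$ ranging over a compact set. If the curve could pass near itself, one would localize $\chi$ to short pieces where $\gamma$ is a graph over a coordinate with $|\gamma_1'|\gtrsim1$. Then $|x-\gamma(s)|\ge|x_1-\gamma_1(s)|\gtrsim|s-s_1|$, where $\gamma_1(s_1)=x_1$, which gives the bound directly.

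The necessity arguments also need $0\notin\gamma(\operatorname{supp}\chi)$, the same standing assumption used in the proof of Theorem~\ref{Thm:nece}, Part (1), where $|x|\gtrsim1$ on $N(\delta)$.
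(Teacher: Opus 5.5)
Your proof is correct and follows essentially the same route as the paper: reduce to $t=1$, use $|x-\gamma(s)|\simeq|x|$ for large $|x|$, bound $\int|x-\gamma(s)|^{\alpha}\,ds$ uniformly for bounded $x$ via a linear sublevel-set estimate (your $|x-\gamma(s)|\gtrsim|s-s_*|$ is equivalent to the paper's $|I_k|\lesssim 2^{-k}$), and test near a point $\gamma(s_0)$ for necessity. You also prove that $\alpha\le 0$ is necessary (via $x\to 0$) and pass from the single point $\gamma(s_0)$ to a set of positive measure, both of which the paper leaves implicit; for the latter, the blow-up of $A\omega_\alpha$ near $\gamma(s_0)$ follows from Fatou's lemma rather than monotone convergence, since $|x-\gamma(s)|^{\alpha}$ is not monotone in $x$.
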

\begin{proof}
Since $\gamma$ satisfies \eqref{nondegenerate}, we can always find a subinterval of $I$ such that $K\geq |\gamma(s)|\geq c$ on that subinterval and which we are again calling by $I.$ As usual we will only write for $t=1,$ and we denote $A:=A_1$ as before.  

When $|x|\geq 4K,$ observe that $|x-\gamma(s)|\geq |x|-|\gamma(s)|\gtrsim |x|,$ hence $|x-\gamma(s)|\simeq |x|$ and we conclude $A(\omega_{\alpha})(x)\lesssim |x|^{\alpha}=\omega_{\alpha}(x).$ Similarly, one can handle the case when $|x|\ll c.$ So, we focus on the case when $c\leq |x|\leq 4 K,$ as in this case the quantity $|x-\gamma(s)|$ can take arbitrarily small values. Denote $I_{k}:=\{s: |x-\gamma(s)|\simeq 2^{-k}\},$ and we use a standard dyadic decomposition
\begin{align*}
A (\omega_{\alpha})(x)=\sum_{k\geq 0} \int_{I_{k}} |x-\gamma(s)|^{\alpha}\chi(s)\, ds\simeq \sum_{k\geq 0}  2^{-k\alpha}2^{-k}, 
\end{align*}
where we used that the arc-length measure of $I_{k}$ is $2^{-k.}$ Now, the estimate $A (\omega_{\alpha})(x)\lesssim |x|^\alpha$ holds true, provided the above sum converges and which is true for $\alpha+1>0.$ This completes the proof of sufficient part of Proposition \ref{pointwiseestimate}.

For the necessary part, we proceed as follows. From the nondegeneracy condition, we assume as before that $\gamma^1(s_0)=e_{1},$ $\gamma^2(s_0)=e_{2},$ and $\gamma^3(s_0)=e_{3}$ for some $s_{0}\in I.$ An application of the Taylor's theorem concludes that 
\begin{align}
\nonumber\gamma(s)-\gamma(s_0)&=\gamma^{1}(s_{0})(s-s_0)+\frac{\gamma^{2}(s_{0})}{2}(s-s_0)^2+\frac{\gamma^3(s_0)}{6}(s-s_0)^3+O(|s-s_0|^4)\\
\label{taylor-2}&=\Big((s-s_0), \frac{(s-s_0)^2}{2}, \frac{(s-s_0)^3}{6}\Big)+O(|s-s_0|^4).
\end{align}
for $s$ sufficiently close to $s_0.$ Considering this, let us evaluate $A\omega_{\alpha}$ at the point $\gamma(s_0).$ 

\begin{align*}
         A \omega_{\alpha}(\gamma(s_{0}))&\gtrsim\int_{|s-s_{0}|\ll\delta}\Big((s-s_0)^2+\frac{(s-s_{0})^4}{4}+\frac{(s-s_{0})^6}{36}\Big)^{\alpha/2}~ds\\
         &\simeq\int_{|s-s_{0}|\ll\delta} (s-s_0)^{\alpha} \Big[  1+\frac{1}{4}(s-s_0)^2+\frac{1}{36}(s-s_0)^4\Big]^{\alpha/2}~ds\\
         &\gtrsim  \int_{|s-s_{0}|\ll\delta} (s-s_0)^{\alpha}~ds.
     \end{align*}
Observe that $\int_{|s-s_{0}|\ll\delta} (s-s_0)^{\alpha}~ds$ is finite provided $\alpha> -1$. This completes the proof. 
\end{proof}

We complete this section with the proof of  Proposition~\ref{single}.

\begin{proof}[Proof of  Proposition~\ref{single}]
The proof uses the well-known factorization theorem from \cite{Jawerth}: let $1\leq p<\infty$ and $T$ be a positive linear self-adjoint operator, the weighted inequality $T : L^p(\omega)\to L^p(\omega)$ holds true if and only if there exist two weights $\omega_0, \omega_1$ such that $\omega=\omega_0 \omega_{1}^{1-p}$ and $T\omega_{i}(x)\lesssim \omega_{i}(x)$ a.e. for $i=1, 2.$ Let $-1<\alpha<0,$ set $\omega_{0}=|x|^{\alpha}$ and $\omega_1=1.$ Using Proposition~\ref{pointwiseestimate} we get $A_{t}\omega_{i}\lesssim\omega_{i},$ therefore the abstract factorization theorem concludes that $A_{t}$ maps $L^p(|x|^\alpha)$ to itself. For $0<\alpha<p-1,$ we define $\omega_{0}=1$ and $\omega_1=|x|^{-\frac{\alpha}{p-1}}$ and conclude as before that $A_{t}$ maps $L^p(|x|^\alpha)$ to itself.
For the end-point case $\alpha=-1,p-1$ we proceed as follows.

We can safely assume that for all $s\in I,$ $c\leq |\gamma(s)|\leq K$ for some  constants $c, K>0$. Let $0<\gamma<1,$ we define
\begin{eqnarray*}
    \omega_0(x)&=&|x|^{-\gamma-1}\chi_{\{|x|\leq c/2\}}+ \Big|\frac{c+K}{2}-|x|\Big|^{-\gamma}\chi_{\{\frac{c}{2}\leq |x|\leq 2K\}}+|x|^{-1}\chi_{\{|x|\geq 2K\}},\\
   \omega_1(x)&=&|x|^{\frac{-\gamma}{p-1}}\chi_{\{|x|\leq c/2\}}+ \Big|\frac{c+K}{2}-|x|\Big|^{\frac{-\gamma}{p-1}}\chi_{\{\frac{c}{2}\leq |x|\leq 2K\}}+\chi_{\{|x|\geq 2K\}}.
\end{eqnarray*}
We only need to prove that $A_{t}\omega_{i}\lesssim \omega_{i},$ then the rest follows from the factorization theorem. We will show it for $t=1.$ The first and third term in $\omega_{0}$ can be handled as in the Proposition~\ref{pointwiseestimate}. While estimating the term $A\big(\big|\frac{c+K}{2}-|\cdot|\big|^{-\gamma}\chi_{\{\frac{c}{2}\leq |x|\leq 2K\}}\big)(x)$ singularities arise when $|x|~\simeq \frac{c+K}{2}.$ Now we decomose
\begin{align}
\nonumber &A\big(|\frac{c+K}{2}-|y||^{-\gamma}\chi_{\frac{c}{2}<|y|<2K})(x)\\
\nonumber&\lesssim \sum_{k\geq 0}\int_{I_{k}} \big|\frac{c+K}{2}- |x-\gamma(s)|\big|^{-\gamma} \,ds
\end{align}
where $I_{k}:=\{s\in I: |x-\gamma(s)|\simeq \frac{c+K}{2}-\frac{1}{2^k}\}$ or $I_{k}:=\{s\in I: |x-\gamma(s)|\simeq \frac{c+K}{2}+\frac{1}{2^k}\}.$ Then it is easy to see that
$$A\big(|\frac{c+K}{2}-|y||^{-\gamma}\chi_{\frac{c}{2}<|y|<2K})(x)\lesssim \sum_{k\geq 0} 2^{k\gamma} 2^{-k}\lesssim \omega_{0}(x),$$
for $|x|\simeq \frac{c+K}{2}.$ Similarly, one can prove $A(\omega_1)\lesssim \omega_{1}.$ This concludes the proof for $\alpha=-1,$ by duality one can obtain the result for $\alpha=p-1.$

%{\color{blue} 
%We decompose the interval $I=[-1,1]$ into two sets $I_1$ and $I_2$ such that for $s\in I_1$, $|\gamma(s)|\leq c$,  and for $s\in I_2$, $c\leq |\gamma(s)|\leq K$ for some large constant $K>0$. Depending on this decomposition we decompose the averaging operator $A$ as $A^1+A^2$, and separately prove the desired estimate  
%for both $A^1$ and $A^2$. First we'll deal with $A^2$. Define for some $0<\gamma<1$,
%\begin{eqnarray*}
    %\omega_0(x)&=&|x|^{-\gamma-1}\chi_{\{|x|\leq c/2\}}+ \Big|\frac{c+K}{2}-|x|\Big|^{-\gamma}\chi_{\{\frac{c}{2}\leq |x|\leq 2K\}}+|x|^{-1}\chi_{\{|x|\geq 2K\}},\\
   %\omega_1(x)&=&|x|^{\frac{-\gamma}{p-1}}\chi_{\{|x|\leq c/2\}}+ \Big|\frac{c+K}{2}-|x|\Big|^{\frac{-\gamma}{p-1}}\chi_{\{\frac{c}{2}\leq |x|\leq 2K\}}+\chi_{\{|x|\geq 2K\}}. 
%\end{eqnarray*}
%It can be easily checked that $A^2\omega_i\lesssim \omega_i$ for $i=0,1$. Hence the factorization theorem concludes that $A^2$ is bounded in $L^p(|\cdot|^{-1})$. Now for $A^1$, we define the weights as follows 
%\begin{eqnarray*}
    %\omega_0(x)&=&|x|^{-\gamma}\chi_{\{|x|\leq 2c\}}+ |x|^{-1} \chi_{\{|x|>2c\}}\\
    %\omega_1(x)&=&|x|^{\frac{-\gamma+1}{p-1}}\chi_{\{|x|\leq 2c\}}+ \chi_{\{|x|>2c\}}.
%\end{eqnarray*}
%Therefore, $A^1\omega_i\lesssim \omega_i$ for $i=0,1$. Now the abstract factorization theorem concludes $L^p(|\cdot|^{-1})$ boundedness of $A^1$. By duality, we get $L^{p}(|\cdot|^{p-1})$ boundedness of $A$.}

\end{proof}

\section{Proof of main results}
\label{proof:main}
This section is dedicated to proving our main results. Throughout this section $\gamma: [-1, 1]\to \mathbb{R}^3$ is a smooth non-degenerate curve satisfying \eqref{nondegenerate}. We begin with recalling local smoothing estimates from \cite{BeltranPLMS}.  Let $b: \mathbb{R}^{3}\setminus \{0\}\times \mathbb{R}\times \mathbb{R}$ be a $C^{\infty}$ function, called as symbol, and we denote the corresponding multiplier as
$$m_{\gamma}[b](\xi;t)=\int_{\R} e^{-i\, t \langle \gamma(s),\xi\rangle} b(\xi;t;s)\Psi_{I}(s)\rho(t)\,ds,$$
for some $\Psi_{I}\in C_{c}^{\infty}(\mathbb{R})$ with $\text{supp}{\Psi_{I}}\subset I:=[-1, 1].$ Using a smooth partition of unity we write $b=\sum_{j\geq 0}b_{j}$ as a sum of frequency localized pieces, where $$b_{j}(\xi;t;s)=b(\xi;t;s) \beta_{j}(\xi),$$ 
and $\beta_{j}(\xi):=\beta(|\xi|/2^{j})$ where the smooth function $\beta$ is defined as $\beta(r):=\zeta(r)-\zeta(2r)$ and $\zeta$ is a non-negative, even, compactly supported smooth function with $\zeta=1$ on $I$ and $\zeta=0$ outside $[-2, 2].$ Finally, $b_{0}$ denotes the low-frequency part $b_{0} (\xi;t;s)=b(\xi;t;s) \zeta(|\xi|).$ For simplicity, we denote \[A^j_tf(x)=m_{\gamma}[b_j](D;t)f(x),\]
where $m_{\gamma}[b_j](\xi;t)=\int_{\R} e^{-i\, t \langle \gamma(s),\xi\rangle} b_{j}(\xi;t;s)\Psi_{I}(s)\rho(t)\,ds$, where $b_j'$s are as described above. Now we recall the $L^p-L^q$ local smoothing estimate from \cite{BeltranPLMS}.
\begin{proposition}[Proposition 3.2 \cite{BeltranPLMS}]\label{Prop3.2}
    Let $\gamma$ as in Theorem \ref{full}. Then for $j\in \mathbb{N}$ and for all $\epsilon>0$,
    \[\Big(\int^2_1 \int_{\R^3} |A^{j}_tf(x)|^6 ~dx dt\Big)^{1/6}\lesssim_{\epsilon} 2^{-j/6} 2^{\epsilon j} \Vert f\Vert_{L^{4}(\R^3)}.\]
\end{proposition}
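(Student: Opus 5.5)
The plan is to follow the microlocal decomposition of Pramanik--Seeger and Beltran et al. Write $m_{\gamma}[b_j](\xi;t)$ as an oscillatory integral in $s$ with phase $t\langle\gamma(s),\xi\rangle$, $|\xi|\sim 2^j$, and split the frequency support into three regions according to the position of $\xi/|\xi|$ relative to the cones $\Sigma_1$ and $\Sigma_2$.

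\emph{Away from $\Sigma_1$.} The phase has no critical point in $s$ on $\operatorname{supp}\Psi_I$. Integration by parts gives $O(2^{-jN})$ bounds for the multiplier and its derivatives. The $L^4\to L^6$ estimate then follows from Young's inequality, with the cost $2^{3j(1/4-1/6)}$ absorbed by the rapid decay.

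\emph{Near $\Sigma_1$, at angular distance $\sim 2^{-\ell}$ from $\Sigma_2$.} The critical point $s_c(\xi)$ is nondegenerate with $|\partial_s^2\phi|\sim 2^{j-\ell}$. Stationary phase writes the piece as $2^{-(j-\ell)/2}$ times a sum of half-wave type operators $e^{it\psi(D)}$, where $\psi(\xi)=\langle\gamma(s_c(\xi)),\xi\rangle$ is homogeneous of degree one. The level sets of $\psi$ are cones with one nonvanishing principal curvature, of size $\sim 2^{\ell}$ at this scale. Taking $\ell=0$ first, the space-time estimate comes from interpolating two bounds:
\begin{itemize}
\item the $L^2_x\to L^2_{x,t}$ bound, which is trivial;
\item an $L^2\to L^6_{x,t}$ Strichartz estimate proved by a $TT^*$ argument. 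The required dispersive bound $\|e^{it\psi(D)}\beta_j\|_{L^1\to L^\infty}\lesssim 2^{3j}(2^j|t|)^{-1/2}$ holds because only one curvature direction is available.
\end{itemize}
This $L^2\to L^6$ Strichartz bound is then combined with the $\ell^6$ (or $\ell^4$) decoupling inequality for the cone in $\mathbb{R}^{3+1}$. Decoupling reduces matters to plates where Bernstein's inequality is lossless, and these pieces are summed with a $2^{\epsilon j}$ loss. For the $L^4$ input, interpolate with the trivial $L^1\to L^\infty$ and $L^\infty\to L^\infty$ bounds, or use the $L^4$ local smoothing on the input side. One then checks that the exponents produce exactly $2^{-j/6+\epsilon j}$ for $\ell=0$.

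\emph{The binormal region, $2^{-\ell}\gtrsim 2^{-j/3}$.} This is the main obstacle. Here the second derivative degenerates and the $\ell$-dependence must be tracked. Rescaling by the anisotropic dilation adapted to the moment curve (the affine normalization used in Section~\ref{Necessary}, $\gamma^{(i)}(s_0)=e_i$) maps a $2^{-\ell}$-neighbourhood of $\Sigma_2$ at frequency $2^j$ to the $\ell=0$ situation at frequency $2^{j-3\ell}$. Each piece is then estimated by the previous case with an explicit Jacobian factor. The angular sectors in $s_c$ are recombined using decoupling for the nondegenerate cone generated by $\gamma$ (Bourgain--Demeter--Guth type), again with a $2^{\epsilon j}$ loss. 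The worst case $2^{-\ell}\sim 2^{-j/3}$ is handled directly: the multiplier is $O(2^{-j/3})$ and is supported in a slab of volume $2^{3j}\cdot 2^{-2j/3}$, so the bound follows by Bernstein and Hausdorff--Young.

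The delicate point is verifying that, after rescaling, the powers of $2^{\ell}$ from the Jacobian, the decoupling constant and the stationary phase factor $2^{-(j-\ell)/2}$ combine into a bound that is summable in $\ell$, or at worst gives $O(j)$ and hence only a further $2^{\epsilon j}$ loss, uniformly up to $2^{-j/6}$. I would first check the two endpoints $\ell=0$ and $\ell=j/3$ explicitly. I expect the exponent in $\ell$ at $L^4\to L^6$ to vanish, so that the intermediate scales follow by the same bookkeeping.
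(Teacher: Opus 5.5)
The paper gives no proof of this estimate; it quotes it from \cite{BeltranPLMS}, where the argument rests on multilinear tools (a localised Bennett--Carbery--Tao restriction theorem) rather than on decoupling alone. Your decoupling-and-rescaling route has a genuine gap, and it sits exactly at the step you flag as delicate.

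Your rescaling heuristic is correct for a \emph{single} sector. For $s$ in an interval of length $2^{-\ell}$, the substitution gives a factor $2^{-\ell}$, the map $D_\ell$ has Jacobian $2^{-6\ell}$, and $\|g\|_{L^4}=2^{6\ell/4}\|f_\nu\|_{L^4}$. The level-zero bound at frequency $2^{j-3\ell}$ then gives
\[
\|F_\nu\|_{L^6_{x,t}}\lesssim 2^{-2\ell}\,2^{6\ell/4}\,2^{-(j-3\ell)/6+\epsilon j}\|f_\nu\|_{L^4}=2^{-j/6+\epsilon j}\|f_\nu\|_{L^4}.
\]
This invariance is precisely what makes $(1/4,1/6)$ critical, and it moves the entire difficulty into recombining the $2^{\ell}$ sectors.

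Decoupling does not do that recombination without loss:
\begin{itemize}
\item Decoupling bounds $\|\sum_\nu F_\nu\|_{L^6}$ by $2^{\epsilon\ell}(\sum_\nu\|F_\nu\|_{L^6}^2)^{1/2}$.
\item But $\|f\|_{L^4}$ only controls $(\sum_\nu\|f_\nu\|_{L^4}^4)^{1/4}$, so passing from $\ell^2$ to $\ell^4$ costs $2^{\ell/4}$.
\item Switching to an $\ell^q$ decoupling does not help, since its constant is at least $2^{\ell(1/2-1/q)}$; every choice of $q$ loses $2^{\ell/4}$.
\end{itemize}
So level $\ell$ only gives $2^{-j/6+\ell/4+\epsilon j}$, which is $2^{-j/12}$ for $\ell\approx j/3$. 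Interpolating the level-$\ell$ Strichartz bound with the level-$\ell$ $L^6$ decoupling bound produces the same loss.

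Your direct treatment of $\ell\approx j/3$ also fails. Hausdorff--Young cannot be applied to an $L^4$ input. Any fixed-time Bernstein-type bound is at best $2^{-j/3}(2^{7j/3})^{1/12}=2^{-5j/36}$, which is weaker than $2^{-j/6}$. That piece is a sum of $2^{j/3}$ boxes of size $2^j\times 2^{2j/3}\times 2^{j/3}$, each of which individually obeys the $2^{-j/6}$ bound. So it is the same recombination problem again.

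What is missing is a mechanism that exploits transversality between sectors when many of them are active at once. This would be a broad/narrow decomposition: the narrow part is handled by your rescaling (induction on scales), and the broad part by multilinear restriction.

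By contrast, your level $\ell=0$ can be closed along your lines. Use $\ell^2$ decoupling at $L^6$ into the $2^{j/2}$ plates, then Bernstein on plates of volume $2^{5j/2}$, then H\"older from $\ell^2$ to $\ell^4$; this gives exactly $2^{-j/6+\epsilon j}$. The interpolations with the $L^1\to L^\infty$ or $L^\infty\to L^\infty$ bounds that you mention cannot reach the point $(1/4,1/6)$.
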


We also recall the sharp $L^p-L^p$ local smoothing estimate recently proved in \cite{Changkeon}. Though the theorem is proved in any dimension $n\geq 2,$ we only write it for $n=3$

\begin{theorem}[Theorem 1.1 \cite{Changkeon}]\label{LS:thm}
Let $\gamma$ be a smooth non-degenerate curve satisfying \eqref{nondegenerate}. Let $2\leq p<\infty$ and denote $\sigma(p, 3):=\min{\{\frac{1}{3}, \frac{1}{3}(\frac{1}{2}+\frac{2}{p}), \frac{2}{p}\}}.$ Then for all $j\in \mathbb{N},$ and for all $\epsilon>0$  we have
\[\Big(\int^2_1 \int_{\R^3} |A^{j}_tf(x)|^p ~dx dt\Big)^{1/p}\lesssim_{\epsilon} 2^{-j\sigma(p, 3)} 2^{\epsilon j} \Vert f\Vert_{L^{p}(\R^3)}.\]
\end{theorem}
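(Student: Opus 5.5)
Since Theorem~\ref{LS:thm} is quoted from \cite{Changkeon}, in the paper we would simply invoke it. If one wanted to reprove it, the plan would be to reduce to two critical exponents and interpolate. The exponent $\sigma(p,3)$ equals $\frac13$ for $2\le p\le 4$, equals $\frac16+\frac{2}{3p}$ for $4\le p\le 8$, and equals $\frac2p$ for $p\ge 8$. The endpoint $p=2$ follows from Plancherel together with the decay $|m_\gamma[b_j](\xi;t)|\lesssim 2^{-j/3}$ on $|\xi|\sim 2^j$, which comes from van der Corput and \eqref{nondegenerate}. The endpoint $p=\infty$ with $\sigma=0$ is trivial, since the kernel of $A^j_t$ has uniformly bounded $L^1$ norm. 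The function $1/p\mapsto\sigma(p,3)$ is piecewise linear in $1/p$, so complex interpolation (with an $\epsilon$-loss) reduces everything to proving the two estimates
\[
\|A^j_tf\|_{L^4(\R^3\times[1,2])}\lesssim_\epsilon 2^{-j/3+\epsilon j}\|f\|_4,\qquad \|A^j_tf\|_{L^8(\R^3\times[1,2])}\lesssim_\epsilon 2^{-j/4+\epsilon j}\|f\|_8 .
\]

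For these we would use the Pramanik--Seeger decomposition of the multiplier according to the distance of $\xi$ to $\Sigma_1$ and $\Sigma_2$. First, stationary phase in $s$ writes $A^j_t$, on the part where $|\langle\gamma''(s),\xi\rangle|\gtrsim 2^{j}$, as a Fourier integral operator whose cinematic curvature has rank one. This is the circular-maximal-type regime, and there the sharp $L^p$ local smoothing of Guth--Wang--Zhang type in one fewer variable gives the gain. Second, near the binormal cone $\Sigma_2$ we localize to $|\langle\gamma''(s),\xi\rangle|\sim 2^{j-\ell}$ with $0\le \ell\le j/3$. After a rescaling adapted to the Frenet frame, these pieces become conic multipliers supported in plates along the cone $\{\lambda\,\gamma'''(s)\times\ldots\}$ generated by a non-degenerate curve in $\R^4$ (space plus time). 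The decay available on the $\ell$-th piece is $2^{-(j-\ell)/2}2^{-\ell/3}$-type, so one needs to recover the remaining loss through orthogonality in $s$.

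The main obstacle is the square-function (reverse) estimate for this non-degenerate cone, i.e.\ the analogue of the Guth--Wang--Zhang sharp cone square function estimate for cones over moment-type curves. We would first try the decoupling inequality for the moment curve (Bourgain--Demeter--Guth), lifted to cones, which already gives the $L^8$ bound up to $\epsilon$-losses; this is the critical exponent for $\ell^2$ decoupling of the cone over a curve in $\R^3$. At $p=4$ decoupling is lossy, so we would instead aim for a sharp $L^4$ square function estimate. The plan is to obtain it via an induction-on-scales argument combining a Kakeya/Nikodym-type maximal bound for the relevant plates with a rescaling (Lorentz-type) invariance of the cone, in the spirit of \cite{Guth2020}. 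Once the square function bound is in hand, each plate piece is estimated by the fixed-time decay plus the $L^2$-orthogonality, summing over $\ell$ with an $\epsilon$-loss.
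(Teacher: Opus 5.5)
The paper gives no proof of this statement. It is imported from the cited work, and the only instance the paper uses is $p=4$, namely the bound $2^{-j/3+\epsilon j}$ on $L^4(\R^3\times[1,2])$ in the proof of Lemma~\ref{deltagainfor3}. So your opening sentence is exactly the paper's treatment. Your reduction is also sound: $\sigma(p,3)$ is concave and piecewise linear in $1/p$ with kinks at $p=4$ and $p=8$. After the trivial $p=2$ and $p=\infty$ bounds, everything rests on your two displayed estimates, and neither can itself come from interpolation. However, your self-contained argument establishes neither of them, and one step is wrong.

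\textbf{The $L^8$ step fails.} Moment-curve decoupling lifted to cones does not give the $L^8$ bound. The worst piece near $\Sigma_2$ splits into $N\approx 2^{j/3}$ plates, one per $s$-interval of length $2^{-j/3}$. In space-time these are the canonical $2^j\times 2^{2j/3}\times 2^{j/3}\times 1$ planks of the cone generated by $s\mapsto\big(\gamma'\times\gamma'',\,-\langle\gamma,\gamma'\times\gamma''\rangle\big)(s)$. On each plate, $A^j_t$ is essentially $2^{-j/3}$ times translation by $t\gamma(s_\theta)$. So $\ell^2$ decoupling, H\"older in $\theta$, and $(\sum_\theta\|f_\theta\|_p^p)^{1/p}\lesssim\|f\|_p$ yield only
\[
2^{\epsilon j}N^{\frac12-\frac1p}\,2^{-\frac j3}=2^{-\frac j3\left(\frac12+\frac1p\right)+\epsilon j}.
\]
At $p=8$ this is $2^{-5j/24}$, not $2^{-j/4}$, and the exponent is $\ge\frac2p$ only when $p\ge 10$. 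The missing $\frac{1}{3p}$, i.e.\ the passage from $\frac13(\frac12+\frac1p)$ to $\frac13(\frac12+\frac2p)$, is a square-function-type gain. The kink at $p=8$ lies on the same line $\frac13(\frac12+\frac2p)$ as $p=4$, so it needs the same kind of sharp input, not just decoupling.

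Two smaller errors: $8$ is not a critical $\ell^2$-decoupling exponent here (it is $6$ for cones over planar curves and $12$ for the cone over a non-degenerate space curve). Also, the decay on the $\ell$-th piece is $2^{-(j-\ell)/2}$ for $0\le\ell\le j/3$; your $2^{-(j-\ell)/2}2^{-\ell/3}$ would beat the $2^{-j/3}$ decay on $\Sigma_2$, which is impossible.

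\textbf{The $L^4$ step is only a plan.} You correctly identify what is needed: a sharp $L^4$ reverse square function estimate for these plates together with a Nikodym-type maximal bound. But you only describe a strategy for proving it. That estimate is the real content of the cited theorem, and it is exactly the case the paper relies on.

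So as a reproof, the proposal reduces the theorem to two estimates it does not prove. For the paper's purposes, citing the result, as you say at the start, is the right course and is what the paper does.
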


Now that we have gathered our prerequisites, let us turn our attention to the proof of Theorem~\ref{full}. Define $$\mathcal{W}_1=\{\omega: M\omega(x)\lesssim \omega(x),~\text{for a.e.} x\}.$$
From Proposition~\ref{pointwiseestimate}, we can easily conclude that $\omega_{\alpha}(x)=|x|^{\alpha}\in \mathcal{W}_1$ for $-1<\alpha\leq 0$. Following \cite{JavierVega}, $M$ is bounded on $\omega L^{\infty}$ if and only if $\omega\in \mathcal{W}_{1},$ where the space $\omega L^{\infty}$ denotes the space of measurable functions $f$ such that $\omega^{-1}\, f\in L^{\infty}$ and $\|f\|_{\omega L^{\infty}}=\|\omega^{-1}\, f\|_{L^{\infty}}.$ Therefore, for all $\omega_{\alpha}(x)=|x|^{\alpha}$ with $-1<\alpha\leq 0,$ $M$ maps $\omega_{\alpha}L^{\infty}$ to itself. Now, applying interpolation with change of measure with the unweighted estimate $M: L^p\to L^p$ for $3<p\leq \infty$ we obtain that:
\[M: L^p(\omega_{\alpha}^{p_{0}-p})\to L^p(\omega_{\alpha}^{p_{0}-p})\]
for $3<p_{0}<p$ and $-1<\alpha\leq 0.$ This implies the following result: \textit{$M$ is bounded on $L^{p}(|x|^{\alpha})$, for $3<p<\infty$  and $0\leq \alpha<p-3$.} This concludes Theorem~\ref{full} for $0\leq \alpha<p-3.$ The proof of $L^p(|x|^{\alpha})$ boundedness for negative values of $\alpha$ is more challenging and will be divided into subsequent steps.

\subsection{Proof of Theorem~\ref{full} for $4\leq p<\infty$}

Using \ref{Prop3.2} we first prove the following estimate at $p=4$ which will enable us to obtain the complete range of weighted estimates for $4\leq p<\infty.$
\begin{lemma}\label{deltagainfor4}
    Let $0<\delta<1$ and $j\in\mathbb{N}$. Then for any $\epsilon>0$ we have
     \[\Big( \int_{|x|<\delta} \sup_{1\leq t\leq2}|A^{j}_tf(x)|^4 ~dx \Big)^{1/4}\lesssim_{\epsilon} \delta^{1/4} 2^{\epsilon j} \Vert f\Vert_{L^{4}(\R^3)}.\]
\end{lemma}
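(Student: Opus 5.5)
We will prove the estimate by splitting into two regimes, comparing $\delta$ with $2^{-j}$, and controlling the supremum in $t$ by a Sobolev embedding in the time variable.

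\emph{Step 1: pass from the supremum to space-time norms.} Since $\widehat{A^j_tf}$ is supported where $|\xi|\sim 2^j$ and the symbol depends smoothly on $t$, we have $\partial_t A^j_t f = 2^{j}\tilde A^j_t f$, where $\tilde A^j_t$ is an operator of the same type with a modified symbol. The fundamental theorem of calculus (or the one-dimensional Sobolev embedding $W^{1/q+\epsilon,q}\subset L^\infty$) in $t$ then gives, pointwise in $x$,
\begin{equation*}
\sup_{1\le t\le 2}|A^j_tf(x)|^q\lesssim \int_1^2|A^j_tf(x)|^q\,dt+\Big(\int_1^2|A^j_tf(x)|^q\,dt\Big)^{1-1/q}\Big(\int_1^2|2^{j}\tilde A^j_tf(x)|^q\,dt\Big)^{1/q}.
\end{equation*}
Consequently, for any set $E$, the quantity $\|\sup_t|A^j_tf|\|_{L^q(E)}$ is bounded by $2^{j/q}$ times space-time $L^q(E\times[1,2])$ norms of operators of the form $A^j_t$.

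\emph{Step 2: the regime $\delta\ge 2^{-j}$.} Apply Step 1 with $q=6$ and $E=\{|x|<\delta\}$, then use Proposition~\ref{Prop3.2}. This yields
\begin{equation*}
\Big\|\sup_t|A^j_tf|\Big\|_{L^6(|x|<\delta)}\lesssim 2^{j/6}\,2^{-j/6}\,2^{\epsilon j}\|f\|_{L^4}=2^{\epsilon j}\|f\|_{L^4}.
\end{equation*}
Hölder's inequality on the ball, whose measure is $\delta^3$, gives
\begin{equation*}
\Big\|\sup_t|A^j_tf|\Big\|_{L^4(|x|<\delta)}\le \delta^{3(1/4-1/6)}\Big\|\sup_t|A^j_tf|\Big\|_{L^6(|x|<\delta)}=\delta^{1/4}\Big\|\sup_t|A^j_tf|\Big\|_{L^6(|x|<\delta)},
\end{equation*}
which is exactly the claimed bound in this regime. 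The gain $\delta^{1/4}$ comes precisely from the $L^4\to L^6$ improvement, which is why the lemma is stated at $p=4$.

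\emph{Step 3: the regime $\delta<2^{-j}$.} Here the ball is smaller than the uncertainty scale, so $A^j_tf$ is essentially constant on it. We use the bound
\begin{equation*}
\Big\|\sup_t|A^j_tf|\Big\|_{L^4(|x|<\delta)}\le \delta^{3/4}\Big\|\sup_t|A^j_tf|\Big\|_{L^\infty}.
\end{equation*}
To estimate the $L^\infty$ norm, write $A^j_tf=K^j_t*f$. The kernel $K^j_t$ is a smoothed measure on the curve at thickness $2^{-j}$: it has size about $2^{2j}$ on a $2^{-j}$-neighborhood of $t\gamma$, which has volume about $2^{-2j}$, and it decays rapidly away from that neighborhood. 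Hence
\begin{equation*}
\|K^j_t\|_{L^{4/3}}\lesssim 2^{2j}\,2^{-2j\cdot 3/4}=2^{j/2},
\end{equation*}
uniformly in $t$, and Hölder gives $\sup_t|A^j_tf(x)|\lesssim 2^{j/2}\|f\|_{L^4}$. This produces the bound $\delta^{3/4}2^{j/2}\|f\|_{L^4}$. Since $\delta<2^{-j}$ means $2^{j/2}<\delta^{-1/2}$, we get $\delta^{3/4}2^{j/2}\le\delta^{1/4}$, as required. Combining Steps 2 and 3 proves the lemma.

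\emph{Main obstacle.} The delicate part is the bookkeeping in Step 1. We must check that the factor $2^{j/q}$ produced by the $t$-derivative exactly cancels the $2^{-j/6}$ gain in Proposition~\ref{Prop3.2}. This requires verifying that the differentiated symbol $\tilde b_j=2^{-j}\partial_t(\cdots)$ is still an admissible symbol for Proposition~\ref{Prop3.2}, with constants uniform in $j$. If the fundamental theorem of calculus form is awkward to carry out, we would instead use the fractional form of the same estimate, as in \cite{BeltranPLMS}.
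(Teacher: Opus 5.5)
Your proposal is correct and is essentially the paper's argument: Hölder from $L^4$ to $L^6$ on the ball gives the factor $\delta^{1/4}$, a Sobolev embedding in $t$ costs $2^{j/6}$, and Proposition~\ref{Prop3.2} pays this back. The paper realizes the embedding with $\partial_t^{\beta}$, $\beta>1/6$, obtained by interpolating the $\beta=0$ and $\beta=1$ bounds, which is equivalent to your fundamental-theorem-of-calculus form. The only real difference is your case split: Step 2 never uses $\delta\ge 2^{-j}$, so it already covers all $0<\delta<1$, and Step 3, while correct, is unnecessary.
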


\begin{proof}
    Using H\"{o}lder's inequality we get
    \begin{eqnarray*}
        \Big( \int_{|x|<\delta} \sup_{1\leq t\leq2}|A^{j}_tf(x)|^4 ~dx \Big)^{1/4} &\lesssim&  \delta^{1/4} \Big( \int_{|x|<\delta} \sup_{1\leq t\leq2}|A^{j}_tf(x)|^6 ~dx \Big)^{1/6}\\
        &\lesssim& \delta^{1/4} \Big(\int_{|x|<\delta} \int^2_{1}|\partial^{\beta}_{t}(A^{j}_{t})f(x)|^6~ dt dx\Big)^{1/6}.
    \end{eqnarray*}
    In the last inequality, we have used the one-dimensional Sobolev embedding with $\beta>1/6$. To estimate the above, we interpolate between $\beta=0$ and $\beta=1$. Indeed, using Proposition \ref{Prop3.2} we get that for any $\epsilon>0$
     \begin{eqnarray*}
         \Big(\int^2_1 \int_{|x|<\delta} |A^{j}_tf(x)|^6 ~dx dt\Big)^{1/6}
        &\lesssim_{\epsilon}&  2^{-j/6}2^{j\epsilon} \Vert f\Vert_{L^4(\R^3)}.
    \end{eqnarray*}
    On the other hand, as the operator $\partial_{t}(A^{j}_t)$ is essentially same as $2^{j}A^{j}_t,$ we have
    \begin{eqnarray*}
         \Big(\int^2_1 \int_{|x|<\delta} |\partial_{t}(A^{j}_t)f(x)|^6 ~dx dt\Big)^{1/6}
        &\lesssim_{\epsilon}& 2^j 2^{-j/6}2^{j\epsilon} \Vert f\Vert_{L^4(\R^3)}.
    \end{eqnarray*}
    Hence, combining the above two estimates we get
    \begin{eqnarray*}
         \Big(\int^2_1 \int_{|x|<\delta} |\partial^{\beta}_{t}(A^{j}_t)f(x)|^6 ~dx dt\Big)^{1/6}
        &\lesssim_{\epsilon}& 2^{\beta j} 2^{-j/6}2^{j\epsilon} \Vert f\Vert_{L^4(\R^3)},
    \end{eqnarray*}
    where $\beta>1/6$. Since $\epsilon>0$ is arbitrarily small, choosing $\beta>\frac{1}{6}$ sufficiently close to $\frac{1}{6}$ we get $2^{\beta j} 2^{-j/6}2^{j\epsilon}\leq 2^{2j\epsilon}$. This completes the proof.
\end{proof}

\begin{proof}[Proof of Theorem~\ref{full}, Part (1)]
{The proof of Theorem~\ref{full}, Part (1) will be done in several steps. We first prove the following estimate for the local maximal operator, that is, we shall prove that there exists some $\epsilon(p)>0$ such that
\begin{align}
\label{local}
\int_{\R^3} \sup_{1\leq t\leq2}|A^{j}_{t}f(x)|^p |x|^{\alpha} ~dx\leq C 2^{-\epsilon(p)j} \int_{\R^3} |f(x)|^p |x|^{\alpha}~dx,
\end{align}
for $4\leq p<\infty$ and $-1<\alpha<0.$ Once the proof of \eqref{local} is achieved, the passage from local to global is standard, we provide a brief outline. Recall that we can write $$Mf(x)\lesssim \sum_{j=1}^{\infty}\big(\sum_{\ell\in \Z}\sup_{1\leq t\leq 2}|A_{2^{\ell}t}\beta_{j-l}(D)f(x)|^{p}\big)^{1/p}+M_{\text{HL}}f(x),$$
where $M_{\text{HL}}$ stands for the Hardy-Littlewood maximal operator. By triangle inequality, $\|Mf\|_{L^p(|x|^{\alpha})}$ is dominated by
\begin{align}
\sum_{j=1}^{\infty}\big(\sum_{\ell\in \Z} \|\sup_{1\leq t\leq 2}|A_{2^{\ell}t}\beta_{j-l}(D)f\|_{L^p(|x|^{\alpha})}^{p}\big)^{1/p}.
\label{loc-to-glo}
\end{align}
Now a simple rescaling argument shows that 
$$\sup_{1\leq t\leq 2 } \big|A_{2^{\ell}t}\beta_{j-l}(D)f(x)\big|=\sup_{1\leq t\leq 2} \big|A_{t}\beta_{j}(D)f_{2^{\ell}}\big(\frac{x}{2^{\ell}}\big)\big|,$$ 
where $f_{2^{\ell}}(\cdot)=f(2^{\ell}\cdot).$ As a consequence, we obtain \[\|\sup_{1\leq t\leq 2}|A_{2^{\ell}t}\beta_{j-l}(D)\|_{L^p(|x|^{\alpha})\to L^p(|x|^{\alpha})}=\|\sup_{1\leq t\leq 2}A^{j}_{t}{\beta}_{j}(D)\|_{L^p(|x|^{\alpha})\to L^p(|x|^{\alpha})}.\]
Now using \eqref{local} in \eqref{loc-to-glo} we obtain that $\|Mf\|_{L^p(|x|^{\alpha})}$ is dominated by 
$$\sum_{j=1}^{\infty} 2^{-\epsilon(p)j} (\sum_{\ell\in \Z} \|\tilde{\beta}_{j-l}(D)(f)\|_{L^p(|x|^{\alpha})}^{p}\big)^{1/p},$$
which, after summing in $j,$ can be further dominated by $$\|(\sum_{\ell\in \Z} |\tilde{\beta}_{l}(D)(f)|^2)^{1/2}\|_{L^p(|x|^{\alpha})}\leq c\|f\|_{L^p(|x|^{\alpha})}$$ as $2\leq p<\infty$ and $|x|^{\alpha}$ is an $\mathcal{A}_{p}$ weight on $\mathbb{R}^3$ for $-1<\alpha\leq 0.$}

Here onwards we focus on proving \eqref{local}. The proof of \eqref{local} crucially depends on the following estimate. For $j\in \mathbb{N}$ and $\epsilon>0$, we have
\begin{align}
\label{mainprop4}
\int_{\R^3} \sup_{1\leq t\leq2} |A^j_tf(x)|^4 |x|^{-1} ~dx\lesssim_{\epsilon} 2^{j\epsilon} \int_{\R^3} |f(x)|^4  |x|^{-1}~dx.
\end{align}
Once \eqref{mainprop4} is proved, Stein--Weiss interpolation with the unweighted estimate  
\begin{align}
\label{decayestimate}
\int_{\R^3} \sup_{1\leq t\leq2} |A^j_tf(x)|^p ~dx\lesssim 2^{-j\gamma(p)} \int_{\R^3} |f(x)|^p\,dx\, ;\quad \gamma(p)>0, 3<p< \infty
\end{align}
yields that there exists some $\gamma'(\alpha, p)>0$ such that
\begin{align}
\label{p:large}
\int_{\R^3} \sup_{1\leq t\leq2}|A_{t}^{j}f(x)|^p |x|^{\alpha} ~dx\lesssim 2^{-j \gamma'(\alpha, p)}\int_{\R^3} |f(x)|^p |x|^{\alpha}~dx    
\end{align}
for $4\leq p<\infty$ and $-1<\alpha<0.$ Summing \eqref{p:large} over $j\geq 0,$ we obtain \eqref{local}. Now the rest of the proof is dedicated to prove \eqref{mainprop4}.

As $\gamma$ satisfies \eqref{nondegenerate}, there is no harm in assuming that $0<c\leq |\gamma(s)|\leq K$ on some subinterval of $I,$ and we call the subinterval as $I$ again. By rescaling we consider $c=2^{-1}$ and $K=2^2$. Observe that, by method of stationary phase, it is enough to work with the main term of $A^{j}_{t}f$, which we denote by $$\widetilde{A}^{j}_{t}f(x)=2^{2j} \chi_{\{||\cdot|-t|\gamma (\cdot)||<t2^{-j}\}}\ast f(x),$$ as the remaining terms decay rapidly. This localization helps us to focus on the most singular part of the operator. Since $t\in[1,2]$, we observe that when $supp(f)\subset \{|x|\leq 1/4\}$ then $supp(\widetilde{A}^{j}_{t}f)\subset \{1/4\leq |x|\leq 2^4\}$, and $|x|^{-1}$ behaves like a constant in $\{1/4\leq |x|\leq 2^4\}$. Moreover, when
$supp(f)\subset \{2^k\leq |x|\leq 2^{k+1}\}$ for $k\geq5$, then $supp(\widetilde{A}^{j}_{t}f)\subset \{2^k-16\leq |x|\leq 2^k+16\}$. In this case the weight $|x|^{-1}$ behaves like $2^{-k}$ in both the sides. Therefore, the desired weighted estimate follows from the unweighted estimate of the operator $A^j_{t}$. Therefore, we proceed with the proof of \eqref{mainprop4} when $supp(f)\subset \{2^{-2}\leq |x|\leq 2^5\}$. We decompose the integral the right hand side of \eqref{mainprop4} as following
\begin{eqnarray*}
    &&\int_{\R^3} \sup_{1\leq t\leq2} |\widetilde{A}^{j}_{t}f(x)|^4 |x|^{-1}~dx\\
    &&= \int_{|x|\leq 2^{-j}} \sup_{1\leq t\leq2} |\widetilde{A}^{j}_{t}f(x)|^4 |x|^{-1}~dx+\sum^{j}_{k=-4}\,\, \int\limits_{2^{-k}\leq |x|<2^{-k+1}} \sup_{1\leq t\leq2} |\widetilde{A}^{j}_{t}f(x)|^4 |x|^{-1}~dx \\
    &&:= A+B.
\end{eqnarray*}
We first deal with $B$. Using the Lemma~\ref{deltagainfor4} with $\delta=2^{-k+1},$ we obtain
\begin{align*}
    B&\lesssim \sum^{j}_{k=-4} 2^{k}\, \int\limits_{2^{-k}\leq |x|<2^{-k+1}}\, \sup_{1\leq t\leq2} |\widetilde{A}^{j}_{t}f(x)|^4 ~dx\\
    &\overset{\eqref{deltagainfor4}}{\lesssim} \sum^{j}_{k=-4} 2^{k} 2^{-k+1} 2^{j\epsilon} \int |f(x)|^4 ~dx\\
    &\lesssim j^2\,2^{\epsilon j}  \int |f(x)|^4 ~dx\leq 2^{\epsilon j} \int |f(x)|^4 ~dx,~\text{for any}~\epsilon>0.
\end{align*}

Now we proceed with estimating $A$. Here we need to employ the one-dimensional Sobolev embedding theorem. We decompose the function $f$ as $$f=\sum\limits^{2^{j+2}}_{l=0}f_l,$$ where $f_l=f \chi_{B^j_l}$ and $B^j_l=\{x: \frac{1}{2}+l2^{-j}\leq |x|<\frac{1}{2}+(l+1)2^{-j}\}$. Observe that for $|x|\leq 2^{-j}$, $\widetilde{A}^{j}_tf(x)\neq0$ if $t\in I_l$, where length of $I_l$ is equivalent to $2^{-j}$. Therefore,
\begin{align}
  \nonumber  \int_{|x|\leq 2^{-j}} \int^2_1 |\widetilde{A}^{j}_tf(x)|^4 |x|^{-1} ~dt dx &=    \int^2_1\int_{|x|\leq 2^{-j}} |\widetilde{A}^{j}_tf(x)|^4 |x|^{-1} ~dx dt\\
    \nonumber &\leq\sum^{2^{j+2}}_{l=0} \int_{I_l} \int_{|x|\leq 2^{-j}} |\widetilde{A}^{j}_tf_l(x)|^4 |x|^{-1} ~dx dt\\
    \nonumber& \lesssim \sum^{2^{j+2}}_{l=0} 2^{-j} \int |f_l(x)|^4 |x|^{-1}~dx\\
    &\lesssim 2^{-j}  \int |f(x)|^4 |x|^{-1}~dx. \label{interpol:1}
\end{align}
In the second inequality we have used the fact that the single scale averages $A_{t}$ are bounded from $L^4(|x|^{-1})$ to itself and the constant is uniform in $t,$ see Proposition~\ref{single}.  The final inequality follows since the support of $f_l'$s are disjoint. Recall that the operator $\partial_{t}(\widetilde{A}^{j}_t)\simeq 2^{j} \widetilde{A}^{j}_t,$ hence we obtain
\begin{align}
\int_{|x|\leq 2^{-j}} \int^2_1 |\partial_{t}(\widetilde{A}^{j}_t) f(x)|^4 |x|^{-1} ~dt dx\lesssim 2^{-j} 2^{4j} \int |f(x)|^4 |x|^{-1}~dx.
\label{interpol:2}
\end{align}
Interpolating \eqref{interpol:1} and \eqref{interpol:2} we derive

\begin{align*} \int_{|x|\leq 2^{-j}} \int^2_1 \Big|\partial_{t}^{\beta}\widetilde{A}^{j}_tf(x)\Big|^4 |x|^{-1}\,dt\,dx&\lesssim 2^{-j} 2^{4\beta j} \int |f(x)|^4 |x|^{-1}~dx\\
&\leq 2^{j\epsilon} \int |f(x)|^4 |x|^{-1}~dx,\end{align*}
provided we choose $\beta>\frac{1}{4},$ arbitrarily close to $\frac{1}{4}.$ Now, employing the one-dimensional Sobolev embedding theorem we get
\[A\lesssim 2^{j\epsilon} \int |f(x)|^4 |x|^{-1}~dx.\]
Hence, combining the estimates of $A$ and $B$ we get the desired estimate \eqref{mainprop4}. This completes the proof of \eqref{mainprop4} and thus completes the proof of \eqref{local}.
\end{proof}

\subsection{Proof of Theorem~\ref{full} for $3<p<4$} Our main result in this section reads as follows: for $j\in\mathbb{N}$ and any $\epsilon>0$, we have
\begin{align}
\label{main:three}
\int_{\R^3} \sup_{1\leq t\leq2} |A^j_tf(x)|^3 |x|^{-\frac{1}{2}} ~dx\lesssim_{\epsilon} 2^{j\epsilon} \int_{\R^3} |f(x)|^3  |x|^{-\frac{1}{2}}~dx.
\end{align}
Once this is proved, the results for $3<p<4$ follow from the Stein--Weiss interpolation of \eqref{mainprop4} and \eqref{main:three}. The remaining part of the section is dedicated to prove \eqref{main:three} and, as usual, we will prove the estimate for $\widetilde{A}_{t}^{j}$ and {functions will be assumed to be supported on $\{2^{-2}\leq |x|\leq 2^{5}\}$}. To begin with, we record the following localization estimate at $L^3$ near the origin.  

\begin{lemma}\label{deltagainfor3}
    Let $j\in\mathbb{N}$ and $\delta\geq2^{-j}$. Then we have
    \begin{eqnarray}
        \int_{|x|\leq\delta}\sup_{1\leq t\leq2} |\widetilde{A}^{j}_tf(x)|^3 ~dx\lesssim \delta^{1/2} 2^{\epsilon j} \int |f(x)|^3~dx,
    \end{eqnarray}
    for any $\epsilon>0$.
\end{lemma}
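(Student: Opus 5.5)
We follow the pattern of Lemma~\ref{deltagainfor4}: Hölder's inequality to pass from $L^3$ to a higher exponent on the small ball, then Sobolev embedding in $t$, then a local smoothing estimate. The gain $\delta^{1/2}$ suggests going from $L^3$ to $L^6$ in $x$, since $|B(0,\delta)|^{(1/3-1/6)\cdot 3}=(\delta^3)^{1/2}=\delta^{3/2}$. That is too much gain, and also requires an $L^3\to L^6$ estimate we do not have. So instead we use the structure of $\widetilde{A}^j_t$ near the origin, as in the estimate of the term $A$ in the proof of \eqref{mainprop4}: for $|x|\le\delta$ only $t$ in an interval of length $\sim\delta$ sees a given annular piece of $f$.

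Concretely, decompose $f=\sum_l f_l$ with $f_l=f\chi_{B_l}$, where $B_l$ are annuli $\{\tfrac12+l\delta\le |y|<\tfrac12+(l+1)\delta\}$ of width $\delta$; there are $O(\delta^{-1})$ of them. For $|x|\le \delta$ and $t\in[1,2]$, the kernel of $\widetilde{A}^j_t$ is supported in $\{||y|-t|\gamma(\cdot)||<t2^{-j}\}$ and $2^{-j}\le\delta$. Hence $\widetilde{A}^j_tf(x)=\sum_{l\in\Lambda(t)}\widetilde{A}^j_tf_l(x)$, with $\#\Lambda(t)=O(1)$. (Here we use that $|\gamma|$ is essentially constant on the relevant piece after localizing $s$, or else one decomposes $I$ into pieces where $|\gamma(s)|$ varies by $\le\delta$.) Writing $I_l$ for the set of $t$ interacting with $B_l$, which has length $\sim\delta$, we get
\[
\sup_{1\le t\le 2}|\widetilde{A}^j_tf(x)|^3\lesssim \sup_l \sup_{t\in I_l}|\widetilde{A}^j_tf_l(x)|^3\le \sum_l \sup_{t\in I_l}|\widetilde{A}^j_tf_l(x)|^3 .
\]
It therefore suffices to prove, for each $l$,
\[
\int_{|x|\le\delta}\sup_{t\in I_l}|\widetilde{A}^j_tf_l|^3\,dx\lesssim \delta^{1/2}2^{\epsilon j}\|f_l\|_3^3,
\]
and then sum using the disjointness of the supports of the $f_l$.

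For a fixed $l$ we apply Sobolev embedding on the interval $I_l$ of length $\delta$, rescaled appropriately:
\[
\sup_{t\in I_l}|F(t)|^3\lesssim \delta^{-1}\int_{I_l}|F|^3+\delta^{3\beta-1}\int_{I_l}|\partial_t^\beta F|^3,\qquad \beta>1/3 .
\]
For the spatial-temporal $L^3$ norm we use Theorem~\ref{LS:thm} with $p=3$, where $\sigma(3,3)=\min\{\tfrac13,\tfrac7{18},\tfrac23\}=\tfrac13$, giving $\int_1^2\!\int|A^j_tf|^3\lesssim 2^{-j+\epsilon j}\|f\|_3^3$. The derivative costs $2^{\beta j}$, so interpolation yields
\[
\int\!\!\int |\partial_t^\beta \widetilde{A}^j_t f_l|^3 \lesssim 2^{3\beta j}2^{-j}2^{\epsilon j}\|f_l\|_3^3 .
\]
With $\beta\downarrow 1/3$, the derivative term contributes $\delta^{3\beta-1}2^{\epsilon' j}\approx 2^{\epsilon' j}$. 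The same bound ignoring the restriction $|x|\le\delta$ gives $2^{\epsilon j}\|f\|_3^3$ with no $\delta$-gain. This is the main obstacle: we must extract the extra factor $\delta^{1/2}$ from the spatial restriction $|x|\le\delta$.

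To get it, we would interpolate between two estimates. The first is the $L^3$ local smoothing bound just described, with no gain. The second is a trivial bound exploiting the small ball: $|\widetilde{A}^j_tf_l(x)|\lesssim \|f_l\|_\infty$ pointwise, which gives $\delta^3\|f_l\|_\infty^3$. Alternatively, one can use an $L^2$ version: Plancherel with decay $2^{-j/3}$ for the multiplier, combined with Hölder on $|x|\le\delta$ and Bernstein at frequency $2^j$ with $\delta\ge 2^{-j}$. Balancing these should produce $\delta^{1/2}$. One can also mirror Lemma~\ref{deltagainfor4} exactly: use Hölder from $L^3$ to $L^q$ on $B(0,\delta)$, gaining $\delta^{3(1/3-1/q)\cdot 3}$, and choose $q$ and the $L^3\to L^q$ smoothing estimate so that the exponent equals $1/2$ and the $2^j$-powers cancel up to $2^{\epsilon j}$. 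I expect this exponent bookkeeping, and verifying that the constraint $\delta\ge2^{-j}$ is exactly what makes the powers of $2^j$ cancel, to be the delicate step.
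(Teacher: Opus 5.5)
\textbf{There is a genuine gap: the $\delta^{1/2}$ gain is never produced.} Your decomposition is the one the paper uses: annuli of width $\delta$, $t$-intervals $I_l$ of length $\lesssim\delta$, Sobolev in $t$ with $\beta\downarrow 1/3$, and local smoothing with decay $2^{-j/3}$. But the proposal stops exactly where the lemma's content lies, and the mechanisms you suggest for the gain do not work.
\begin{itemize}
\item Interpolating the $L^3$ local smoothing bound with an $L^\infty$ bound gives nothing new at $p=3$, because $p=3$ is itself an endpoint. Also, $\delta^3\|f_l\|_\infty^3$ is not controlled by $\|f_l\|_3^3$.
\item The H\"older route needs $3-9/q=1/2$, i.e.\ $q=18/5$, together with $\|\sup_t|A^j_tf|\|_{L^{18/5}}\lesssim 2^{\epsilon j}\|f\|_{L^3}$. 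This is false: the Knapp box of Theorem~\ref{Thm:nece}(2) at scale $2^{-j/3}$ shows that $\sup_t|A^j_t|$ loses $2^{j(1/3-1/q)}$ from $L^3$ to $L^q$. This is consistent with the triangle $\mathcal{T}$ meeting the line $1/p=1/3$ only at $(1/3,1/3)$.
\item The ``$L^2$ + H\"older + Bernstein'' variant names no second endpoint, so it cannot be checked.
\end{itemize}
Moreover, on each $I_l$ you bound the space-time integral by the $L^3$ local smoothing over all of $[1,2]$. That discards the shortness of $I_l$, which is precisely where the gain lives.

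\textbf{The missing step.} You need an $L^2$ space-time estimate that turns the short time interval into a $\delta$-gain, interpolated against an exponent on the other side of $3$ with the same $2^{-j/3}$ decay. For $|x|\le\delta$ each $f_l$ interacts only with $t\in I_l$. So Plancherel and $|\mu_t(\xi)|\lesssim 2^{-j/3}$ at fixed $t$ give
\[
\int_{|x|\le\delta}\int_1^2|\widetilde{A}^j_tf|^2\,dt\,dx\lesssim\sum_l\int_{I_l}\int_{\R^3}|\widetilde{A}^j_tf_l|^2\,dx\,dt\lesssim\delta\,2^{-2j/3}\|f\|_2^2 .
\]
Theorem~\ref{LS:thm} at $p=4$, where $\sigma(4,3)=1/3$, gives $2^{-j/3+\epsilon j}$ in $L^4_{x,t}$ with no $\delta$. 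Interpolating at $p=3$ (weight $1/3$ on the $L^2$ side) bounds the $L^3(\{|x|\le\delta\}\times[1,2])$ norm by $\delta^{1/6}2^{-j/3+\epsilon j}\|f\|_3$. Sobolev in $t$ with $\beta\downarrow 1/3$ then absorbs the factor $2^{-j/3}$, and cubing yields $\delta^{1/2}2^{\epsilon j}$.

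In the paper, the hypothesis $\delta\ge 2^{-j}$ enters only in the decomposition, not in any cancellation of powers of $2^j$. It guarantees that annuli of width $\delta$ are at least as thick as the $2^{-j}$-neighbourhood kernel, so each $t$ sees only $O(1)$ of them.
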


\begin{proof}[Proof of Lemma \ref{deltagainfor3}]
     Our idea is to employ interpolation between $L^2$ estimate and $L^4$ estimate.Let $f_l=f \chi_{B_l(\delta)}$ and $B_l(\delta)=\{x: \frac{1}{2}+l\delta\leq |x|<\frac{1}{2}+(l+1)\delta\},$ then we can write
   \begin{align}
        \nonumber\int_{|x|\leq\delta} \int^2_1|\widetilde{A}^{j}_tf(x)|^2 ~dt dx&\leq \sum^{[\delta^{-1}]+3}_{l=0} \int_{I_l}\int_{|x|\leq\delta} |\widetilde{A}^{j}_tf_l(x)|^2 dx dt\\
        \nonumber&\leq 2^{-2j/3} \sum^{[\delta^{-1}]+3}_{l=0} \int_{I_l} \int_{\R^3} |f_l(x)|^2~dx dt\\
        \label{decay:delta:gain}&\lesssim \delta 2^{-2j/3} \Vert f\Vert^2_{L^2}.
    \end{align}
    It can be checked that for $t\notin I_l$ ( where $|I_l|\lesssim \delta$ and $I_l$'s are disjoint), $\widetilde{A}^{j}_t f_l(x)=0$. In the second inequality we have used Plancherel's identity and that $|\mu_{t}(\xi)|\lesssim (1+t|\xi|)^{-1/3}$. On the other hand, from the sharp $L^p-L^p$ local smoothing estimate Theorem~\ref{LS:thm} we get
   \begin{eqnarray}
   \label{decay:delta:ls}
       \Big(\int_{\R^3} \int^2_1 |A^j_tf(x)|^4 ~dt dx\Big)^{1/4}\lesssim 2^{-\frac{j}{3}+\epsilon j} \Vert f\Vert_{L^4},
   \end{eqnarray}
   for arbitrarily small $\epsilon>0$. Now we apply interpolation between \eqref{decay:delta:gain} and \eqref{decay:delta:ls} to obtain
    \begin{eqnarray*}
        \Big(\int_{|x|\leq\delta}\int^2_{1} |\widetilde{A}^{j}_tf(x)|^3 ~dt dx\Big)^{1/3}\lesssim \delta^{\frac{1}{6}} 2^{-\frac{j}{3}+j\epsilon} \Big(\int |f(x)|^3~dx\Big)^{1/3},
    \end{eqnarray*}
    for arbitrarily small $\epsilon>0$. Finally, employing the one-dimensional Sobolev embedding theorem we get for any $\beta>\frac{1}{3}$ that
    \begin{align*}
        \Big(\int_{|x|\leq\delta} \sup_{1\leq t\leq2} |A^j_tf(x)|^3 ~dx\Big)^{1/3}&\lesssim \Big(\int^2_1 \int_{|x|<\delta} |\partial^{\beta}_{t}(A^{j}_t)f(x)|^3 ~dx dt\Big)^{1/3}\\
        &\lesssim \delta^{1/6} 2^{\beta j} 2^{-j/3} 2^{j\epsilon} \Vert f\Vert_{L^3},
    \end{align*}
for arbitrarily small $\epsilon>0$. Choosing $\beta>\frac{1}{3}$ arbitrarily close to $\frac{1}{3}$ completes the Lemma.
\end{proof}
Now we proceed with the proof of \eqref{main:three}.

\begin{proof}[Proof of \eqref{main:three}]
From similar considerations as in the proof of estimate \eqref{mainprop4}, it is enough to prove \eqref{main:three} for functions supported in $\{2^{-2}\leq |x|\leq 2^5\}$. Then

\begin{align*}
    &\int_{\R^3} \sup_{1\leq t\leq2} |\widetilde{A}^{j}_{t}f(x)|^3 |x|^{-1/2}~ dx\\
    &\leq \int_{|x|\leq 2^{-j}} \sup_{1\leq t\leq2} |\widetilde{A}^{j}_{t}f(x)|^3 |x|^{-1/2}~dx\\
    &+\sum^{j}_{k=-4} \int_{2^{-k}\leq |x|<2^{-k+1}} \sup_{1\leq t\leq2}|\widetilde{A}^{j}_{t}f(x)|^3 |x|^{-1/2}~ dx \\
    &:= A+B.
\end{align*}
We first deal with $B$.
\begin{eqnarray*}
    B&\lesssim& \sum^{j}_{k=-4} 2^{k/2} \int_{2^{-k}\leq |x|<2^{-k+1}} \sup_{1\leq t\leq2}|\widetilde{A}^{j}_{t}f(x)|^3 ~ dx\\
    &\lesssim& \sum^{j}_{k=-2} 2^{k/2} 2^{-k/2} 2^{j\epsilon} \int |f(x)|^3 ~dx\\
    &\lesssim& j^2\,2^{\epsilon j}  \int |f(x)|^3 ~dx\leq 2^{\epsilon j} \int |f(x)|^3 ~dx,~\text{for any}~\epsilon>0.
\end{eqnarray*}
In the second inequality we have applied Lemma \ref{deltagainfor3} with $\delta=2^{-k+1}$.
Now we work with $A$. Here we need to employ the Sobolev embedding theorem. We decompose the function $f$ as $f=\sum^{2^{j+2}}_{l=0}f_l$, where $f_l=f \chi_{B^j_l}$ and $B^j_l=\{x: \frac{1}{2}+l2^{-j}\leq |x|<\frac{1}{2}+(l+1)2^{-j}\}$. Observe that for $|x|\leq 2^{-j}$, $\widetilde{A}^{j}_tf(x)\neq0$ if $t\in I_l$ for finitely many $l$ (at most $8$), where length of $I_l$ is $c 2^{-j}$ for some small constant $c>0$. Therefore,
\begin{eqnarray*}
    \int_{|x|\leq 2^{-j}} \int^2_1 |\widetilde{A}^{j}_tf(x)|^3 |x|^{-\frac{1}{2}} ~dt dx &=&    \int^2_1\int_{|x|\leq 2^{-j}} |\widetilde{A}^{j}_tf(x)|^3 |x|^{-\frac{1}{2}}\,dx dt\\
    &\lesssim&\sum^{2^{j+2}}_{l=0} \int_{I_l} \int_{|x|\leq 2^{-j}} |\widetilde{A}^{j}_tf_l(x)|^3 |x|^{-\frac{1}{2}}\,dx dt\\
    &\lesssim& \sum^{2^{j+2}}_{l=0} c2^{-j} \int |f_l(x)|^3 |x|^{-\frac{1}{2}}\,dx\\
    &\lesssim& 2^{-j}  \int |f(x)|^3 |x|^{-\frac{1}{2}}\,dx.
\end{eqnarray*}
As before, the second inequality follows as $A_{t}$ maps $L^3(|x|^{-1/2})$ to itself, see Proposition~\ref{single} with constant independent of $t$. We can sum as the support of $f_l'$s are disjoint. Similarly, we get
\[ \int_{|x|\leq 2^{-j}} \int^2_1 |(\partial_{t}\widetilde{A}^{j}_t)f(x)|^3 |x|^{-1/2} ~dt dx\lesssim 2^{-j} 2^{3j} \int |f(x)|^3 |x|^{-1/2}~dx.\]
Interpolating the above, choosing $\beta>\frac{1}{3}$ sufficiently close $\frac{1}{3},$ we get
\begin{align*} \int_{|x|\leq 2^{-j}} \int^2_1 |(\partial_{t}^{\beta}\widetilde{A}^{j}_t)f(x)|^3 |x|^{-1/2}\,dt dx&\lesssim 2^{-j} 2^{3\beta j} \int |f(x)|^3 |x|^{-1/2}\,dx\\
&\lesssim 2^{j\epsilon} \int |f(x)|^3 |x|^{-1/2}\,dx.\end{align*}
Then the one-dimensional Sobolev embedding concludes that
\[A\lesssim 2^{j\epsilon} \int |f(x)|^3 |x|^{-1/2}\,dx.\]

Hence, combining the estimates of $A$ and $B$ we get the desired estimate.
\end{proof}

\begin{remark}
We would like to remark that we can even prove the improved estimate
$$A\lesssim 2^{j\epsilon} \int |f(x)|^3 |x|^{-1}\,dx,$$
hence one can improve \eqref{main:three} to the weight $|x|^{-1}$ provided it can be done for $B.$ 
\end{remark}

\subsection{Lacunary maximal function} This section is dedicated in proving the Theorem~\ref{lacunary}. The proof follows the arguments in \cite{JavierVega}, however we give complete details for the readers' convenience. Recall the Littlewood-Paley projections $\widehat{\beta_{j}(D)f}(\xi)=\beta_{j}(\xi) \widehat{f}(\xi),$ where $\beta_{j}(\xi):=\beta(|\xi|/2^{j})$ and the smooth function $\beta$ is defined as $\beta(r):=\zeta(r)-\zeta(2r)$ and $\zeta$ is a non-negative, even, compactly supported smooth function with $\zeta=1$ on $I$ and $\zeta=0$ outside $[-2, 2].$

\begin{proposition}
\label{decay-prop}
Let $1<p<\infty.$ Then there exists $\delta(p)>0$ such that
\begin{align}
\label{unweighted-decay}
\|\sup_{\ell} |A_{2^{\ell}}\beta_{k-\ell}(D)f|\|_{p}\lesssim 2^{-\delta(p) k} \|f\|_{p}, 
\end{align}
for all $k\geq 1.$
\end{proposition}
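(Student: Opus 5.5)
Rescaling reduces the lacunary estimate to a single-scale question, and I would prove it by interpolating between an $L^2$ bound with decay $2^{-k/3}$ and $L^{p_0}$ bounds with at most polynomial loss in $k$. Write $T_\ell f:=A_{2^\ell}\beta_{k-\ell}(D)f$. By rescaling $x\mapsto 2^\ell x$, each $T_\ell$ is conjugate to $A_1\beta_k(D)$. The multiplier of $T_\ell$ is $\mu(2^\ell\xi)\beta(2^{\ell-k}|\xi|)$, and it is supported where $|2^\ell\xi|\simeq 2^k$.

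\emph{Step 1 ($L^2$ decay).} Bound the supremum by the square function:
\[
\sup_\ell|T_\ell f|\le\Big(\sum_\ell|T_\ell f|^2\Big)^{1/2}.
\]
By Plancherel and the decay $|\mu(\eta)|\lesssim(1+|\eta|)^{-1/3}$ recalled in the introduction,
\[
\Big\|\Big(\sum_\ell|T_\ell f|^2\Big)^{1/2}\Big\|_2^2=\int\sum_\ell|\mu(2^\ell\xi)|^2|\beta(2^{\ell-k}\xi)|^2|\widehat f(\xi)|^2\,d\xi\lesssim 2^{-2k/3}\|f\|_2^2 .
\]
This uses the bounded overlap of the supports of $\beta(2^{\ell-k}\cdot)$. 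Hence \eqref{unweighted-decay} holds at $p=2$ with $\delta(2)=1/3$.

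\emph{Step 2 ($L^p$ bounds with polynomial loss for $1<p<\infty$).} Pointwise, $|A_{2^\ell}g|$ is bounded by a maximal average along the dilated curve. Moreover, $\beta_{k-\ell}(D)g$ is controlled by $M_{HL}g$ with a constant independent of $k$. The cleanest route is to use the lacunary maximal function along the curve, $M_{\mathrm{lac}}^0g=\sup_\ell A_{2^\ell}|g|$. This operator is bounded on $L^p$ for all $1<p<\infty$ by the classical Christ / Duoandikoetxea--Rubio de Francia theorem, since the measure $\chi\,ds$ pushed forward by $\gamma$ has Fourier decay. Write $\sup_\ell|T_\ell f|\le\sup_\ell A_{2^\ell}|\beta_{k-\ell}(D)f|$. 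To remove the $\ell$-dependence inside, I would use the vector-valued version:
\[
\Big\|\sup_\ell A_{2^\ell}|g_\ell|\Big\|_p\le\Big\|\Big(\sum_\ell \big(A_{2^\ell}|g_\ell|\big)^2\Big)^{1/2}\Big\|_p\lesssim\Big\|\Big(\sum_\ell|g_\ell|^2\Big)^{1/2}\Big\|_p
\]
for $p\ge2$. This follows by interpolating the $\ell^\infty$-valued bound with the trivial $\ell^1$ bound, or from the positive-operator Fefferman--Stein-type inequality of Duoandikoetxea--Rubio de Francia. Combined with the Littlewood--Paley inequality, it gives an $O(1)$ bound for $p\ge2$. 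For $1<p<2$, I would instead use the trivial estimate $\sup_\ell|T_\ell f|\le\sum_\ell|T_\ell f|$ only on a model piece, or more robustly the Duoandikoetxea--Rubio de Francia estimate
\[
\Big\|\Big(\sum_\ell|A_{2^\ell}\beta_{k-\ell}(D)f|^2\Big)^{1/2}\Big\|_p\lesssim k\,\|f\|_p,\qquad 1<p<\infty .
\]
This follows from a Calderón--Zygmund / vector-valued kernel argument: the kernel of $A_1\beta_k(D)$ is $L^1$-bounded and has Hörmander constant $O(k)$, since its gradient is $2^k$ times an $L^1$-normalised function at scale $2^{-k}$ around the curve.

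\emph{Step 3 (interpolation).} For a given $p$, choose $p_0$ with $p$ strictly between $2$ and $p_0$, and apply Riesz--Thorin to the linear map $f\mapsto\{T_\ell f\}_\ell$ into $L^p(\ell^\infty)$ or $L^p(\ell^2)$. Interpolating $2^{-k/3}$ with $O(k)$ gives $2^{-\theta k/3}k^{1-\theta}\lesssim 2^{-\delta(p)k}$ for some $\delta(p)>0$.

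\emph{Main obstacle.} The hardest part is the $O(k)$ vector-valued bound for $p$ near $1$. This requires checking the Hörmander condition $\int_{|x|>2|y|}\sup_\ell|K_\ell(x-y)-K_\ell(x)|\,dx\lesssim k$ uniformly in the $\ell^2$ norm. I would follow the proof in \cite{JavierVega} (and Duoandikoetxea--Rubio de Francia), splitting into the regions $|y|\ge 2^{\ell-k}$ and $|y|<2^{\ell-k}$ and summing over $\ell$.
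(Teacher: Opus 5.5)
Your proposal is correct, and for $1<p\le 2$ it is the paper's argument. Both use the $L^2$ square-function bound $2^{-k/3}$ from Plancherel and the $|\xi|^{-1/3}$ decay, then an $\ell^2$-valued Calder\'on--Zygmund estimate with H\"ormander constant $O(k)$, then interpolation. The paper verifies $\sum_\ell\int_{|x|>2|y|}|L_{\ell,k}(x-y)-L_{\ell,k}(x)|\,dx\lesssim k$ and gets weak type $(1,1)$ with constant $O(k)$ after randomization.

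The routes differ for $p>2$. The paper simply interpolates the $L^2$ decay with the trivial bound $\|\sup_\ell|A_{2^\ell}\beta_{k-\ell}(D)f|\|_\infty\lesssim\|f\|_\infty$. This holds because each $A_{2^\ell}$ is an average and the kernels of $\beta_{k-\ell}(D)$ have uniformly bounded $L^1$ norm, so it costs nothing. You instead import the classical Duoandikoetxea--Rubio de Francia lacunary theorem together with their positive vector-valued inequality. That is legitimate, but heavier than necessary and a bit backwards: this proposition, summed over $k$, is exactly the mechanism by which the lacunary theorem is proved. Moreover, once you accept that black box, your own two pointwise remarks already give $\sup_\ell|T_\ell f|\lesssim M^0_{\mathrm{lac}}(M_{\text{HL}}f)$. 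That is an $O(1)$ bound for every $1<p<\infty$, which makes both the vector-valued step and the CZ step superfluous.

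A few details in the sketch need fixing.

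The ``trivial estimate'' $\sup_\ell|T_\ell f|\le\sum_\ell|T_\ell f|$ gives no uniform bound, since it leads to $\sum_\ell\|\beta_{k-\ell}(D)f\|_p$. Only the CZ alternative carries the case $p<2$.

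In the H\"ormander condition, the relevant quantity for the square function is the $\ell^2$ norm of $\{K_\ell(x-y)-K_\ell(x)\}_\ell$, not the supremum. It is dominated by the $\ell^1$ sum, as in the paper.

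The splitting needs three regimes rather than two. Normalize $|y|\sim 1$. For $\ell>k$, use the mean-value bound $2^{k-\ell}$. For the $O(k)$ values $0\le\ell\le k$, use the trivial $O(1)$ bound. For $2^\ell\ll|y|$, the region $|x|>2|y|$ only sees the Schwartz tails of a kernel concentrated at scale $2^\ell$, giving $O(2^{-(k-\ell)N})$. Without this last regime, the trivial bounds summed over $\ell\to-\infty$ diverge and you do not obtain $O(k)$.
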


\begin{proof}
The proof of the proposition requires the square function
$$\mathfrak{G}(f)(x)=\left(\sum_{\ell} |A_{2^{\ell}}\beta_{k-\ell}(D)f(x)|^2\right)^{\frac{1}{2}},$$
initially defined for $f\in \mathcal{S}(\mathbb{R}^3).$ It follows from the decay $|m_{\gamma}(\xi, t)|\leq (1+t|\xi|)^{-1/3}$ and orthogonality that $\|\mathfrak{G}(f)\|_{L^2}^{2}\lesssim \sum_{\ell}\|A_{2^\ell}\beta_{k-\ell}(\tilde{\beta}_{k-l}(D)f)\|^2_{L^2} \lesssim \sum_{\ell} 2^{-\frac{2k}{3}} \|\tilde{\beta}_{k-\ell}(D)f\|^2_{2}\lesssim 2^{-\frac{2k}{3}} \|f\|_{2}^2,$ here $\tilde{\beta}_{k}$ is slight flattening of $\beta_{k}$ such that $\beta_{k}\cdot \tilde{\beta}_{k}=\beta_{k}.$ Therefore, $\|\mathfrak{G}f\|_{L^2}\leq C 2^{-k/3}\|f\|_{L^2}.$

Next we shall prove that $\|\mathfrak{G}f\|_{L^{1, \infty}}\leq C 2^{\epsilon\,k}\|f\|_{L^1}$ for all $\epsilon>0.$ Using a standard randomization procedure, we consider the linearized square $\sum_{\ell} a_{\ell} A_{2^{\ell}}\beta_{k-\ell}(D)f,$ where $a=(a_{\ell})$ is a sequence of IID random variables with each $a_{\ell}$ taking values $\pm 1$ with equal probability, and it is enough to prove that $\|\sum_{\ell}a_{\ell} A_{2^{\ell}}\beta_{k-\ell}(D)f\|_{L^{1, \infty}}\leq C 2^{\epsilon k}\|f\|_{L^1},$ with constant independent of the random variable $a.$ To that end, using Calder\'on-Zygmund decomposition, our task is reduced to proving
\begin{align}
\label{CZ:1}
 \int_{|x|>2|y|}\big|L_{k}(x-y)-L_{k}(x)\big|\, dx\leq C 2^{\epsilon k},  
\end{align}
for all $y\in \mathbb{R}^3,$ and here $L_{k}=\sum_{\ell} a_{\ell} L_{\ell, k}$ with $A_{2^\ell}\beta_{k-\ell}(D)f(x)=L_{\ell, k}\ast f(x).$ Hence, \eqref{CZ:1} will be implied if we prove
\begin{align}
\label{CZ:2}
 \sum_{\ell}\int_{|x|>2|y|}\big|L_{\ell, k}(x-y)-L_{\ell, k}(x)\big|\, dx\leq C 2^{\epsilon k}.  
\end{align}
Using the change of variables $x\to 2^{\ell}x,$ we obtain
$J_{\ell, k}(y)=\int_{|x|>2|y|}\big|L_{\ell, k}(x-y)-L_{\ell, k}(x)\big|\, dx=\int_{|x|>2^{1-\ell}|y|}\big|(\mu\ast \widecheck{(\beta_{k})})(x-2^{-\ell}y)-\mu\ast \widecheck{(\beta_{k})}(x)\big|\, dx.$ Further, replacing $y$ by $2y,$  $J_{\ell, k}$ becomes $J_{\ell-1, k},$ and thus it is enough to estimate $J_{\ell, k}(y)$ for all $\ell\in \mathbb{Z}$ but uniformly for $1\leq |y|\leq 2.$ By Young's inequality, we obtain $J_{\ell, k}(y)\lesssim \|\mu\|\|\widecheck{\beta_{k}}\|_{L^1}\lesssim 1$ uniformly in $y.$ The second estimate for $J_{\ell, k}$ follows from the mean value theorem:
\begin{align}
\nonumber J_{\ell, k}(y)&\lesssim \int\int \frac{1}{2^{(\ell-k)n}}\bigg|\widecheck{\beta}\big(\frac{x-y-2^{\ell}z}{2^{\ell-k}}\big)-\widecheck{\beta}\big(\frac{x-2^{\ell}z}{2^{\ell-k}}\big)\big|\, d\mu(z)\, dx\\
&\lesssim\int \int 2^{k-\ell}|y| \frac{1}{2^{(\ell-k)n}} |\nabla \widecheck{\beta} (\frac{(x-2^{\ell}z)-\theta\,y}{2^{\ell-k}})|\, dx\, d\mu(z)\lesssim  2^{k-\ell},
\end{align}
where the inequality holds as integral of $\frac{1}{2^{(\ell-k)n}} \nabla (\widecheck{\beta}(\cdot))$ is bounded. Finally, we obtain
$$J_{\ell, k}(y)\lesssim \min\{1, 2^{k-\ell}\}.$$
Now \eqref{CZ:2} follows as for $\ell>k,$ we use the estimate $J_{\ell, k}\lesssim 2^{k-\ell}$ to sum the terms, and for each $1\leq \ell\leq k,$ using the trivial bound we get that $\sum_{1\leq \ell\leq k} J_{\ell, k}$ is bounded by $O(k),$ finally for $\ell<0,$ we can use another trivial bound $J_{\ell, k}\lesssim 2^{-(k-\ell)N}$ obtained from the Schwartz decay of $\widecheck{\beta}.$ Eventually, we prove that the RHS of \eqref{CZ:2} is controlled by $O(k)$ which implies \eqref{CZ:2} and thus we have proved $\|\sum_{\ell}a_{\ell} A_{2^{\ell}}\beta_{k-\ell}(D)f\|_{L^{1, \infty}}\leq C 2^{\epsilon k}\|f\|_{L^1}$ for all $\epsilon>0.$ Now interpolating with the decay estimate at $L^2,$ we conclude the theorem for $1< p\leq 2.$ Now, for $2<p<\infty,$ we interpolate with the trivial bound at $L^{\infty}.$ 
\end{proof}

\begin{proposition}
\label{lac-uniform}
 Let $1<p<\infty$ and $\omega\in \mathcal{A}_{p}.$ If there exists a constant $C,$ independent of $\ell,$ such that 
\begin{align}
\label{uniform-est}
\int |A_{2^\ell} f(x)|^p\, \omega(x)\, dx\leq C \int |f(x)|^p\, \omega(x)\, dx     
\end{align}
holds for all $\ell\in\mathbb Z,$ then $M_{\text{lac}}: L^p(\omega^s) \to L^p(\omega^s)$  for all $0\leq s<1.$   
\end{proposition}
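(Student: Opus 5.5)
This is the Duoandikoetxea--Vega argument: decompose $M_{\text{lac}}$ into Littlewood--Paley pieces, get decay in $k$ without weights, get uniform bounds with the weight $\omega$, and interpolate with change of measure to reach $\omega^s$. Concretely, split
\[
M_{\text{lac}}f\le \sup_\ell |A_{2^\ell}\zeta(2^{\ell}|D|)f|+\sum_{k\ge1}\sup_\ell|A_{2^\ell}\beta_{k-\ell}(D)f|.
\]
The first term is dominated pointwise by $M_{\text{HL}}f$. Its kernel is $\mu_{2^\ell}$ convolved with an $L^1$-normalized Schwartz bump at scale $2^\ell$, which is majorized by a radially decreasing integrable function at scale $2^\ell$. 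Since $\omega\in\mathcal A_p$ implies $\omega^s\in\mathcal A_p$ for $0\le s<1$, this term is bounded on $L^p(\omega^s)$.

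For each $k\ge1$ we need two estimates for $T_kf:=\sup_\ell|A_{2^\ell}\beta_{k-\ell}(D)f|$.
\begin{enumerate}
\item The unweighted decay bound $\|T_kf\|_p\lesssim 2^{-\delta(p)k}\|f\|_p$ from Proposition~\ref{decay-prop}.
\item A weighted bound with at most polynomial growth, $\|T_kf\|_{L^p(\omega)}\lesssim k^{C}\|f\|_{L^p(\omega)}$ or $\lesssim 2^{\epsilon k}\|f\|_{L^p(\omega)}$ for every $\epsilon>0$.
\end{enumerate}
For (2), bound the sup by the $\ell^p$ sum, so that
\[
\|T_kf\|_{L^p(\omega)}^p\le\sum_\ell \|A_{2^\ell}\beta_{k-\ell}(D)f\|_{L^p(\omega)}^p .
\]
Write $\beta_{k-\ell}(D)=\beta_{k-\ell}(D)\tilde\beta_{k-\ell}(D)$. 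Then $\beta_{k-\ell}(D)$ is bounded on $L^p(\omega)$ uniformly in $\ell$, since it is dominated by $M_{\text{HL}}$ and $\omega\in\mathcal A_p$. The hypothesis \eqref{uniform-est} gives uniform boundedness of $A_{2^\ell}$ on $L^p(\omega)$. Together these give
\[
\|T_kf\|_{L^p(\omega)}^p\lesssim\sum_\ell\|\tilde\beta_{k-\ell}(D)f\|^p_{L^p(\omega)} .
\]
For $p\ge2$ the right-hand side is at most $\|(\sum_\ell|\tilde\beta_\ell(D)f|^2)^{1/2}\|^p_{L^p(\omega)}\lesssim\|f\|^p_{L^p(\omega)}$ by the weighted Littlewood--Paley inequality for $\mathcal A_p$ weights.

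The main obstacle is $1<p<2$, where $\ell^p$ does not dominate $\ell^2$. There I would use vector-valued duality instead: linearize the sup by measurable choices $\ell(x)$, or bound $T_k$ by the square function $\mathfrak G$. Then use that $\{A_{2^\ell}\}$ is uniformly bounded on $L^p(\omega)$ and also on $L^{p'}(\omega^{1-p'})$ (by duality, since $A_{2^\ell}^*$ is the average over the reflected curve $-\gamma$), together with weighted vector-valued Littlewood--Paley bounds. If this is too delicate, a fallback is the Stein--Weiss/Rubio de Francia extrapolation of the $\ell^2$-valued estimate. Either route should give the bound with constant $C_{\omega}$ (or $Ck$) independent of $k$.

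Finally, interpolate with change of measure (Stein--Weiss) between $L^p(dx)$ with norm $2^{-\delta(p)k}$ and $L^p(\omega)$ with norm $Ck^C$ or $C2^{\epsilon k}$. For $0<s<1$ this gives
\[
\|T_k\|_{L^p(\omega^s)\to L^p(\omega^s)}\lesssim 2^{-(1-s)\delta(p)k}\,2^{s\epsilon k} .
\]
Choose $\epsilon<(1-s)\delta(p)/s$, so the bound is summable in $k$, and sum over $k$. The case $s=0$ is the unweighted bound. Together with the low-frequency term, this proves $M_{\text{lac}}:L^p(\omega^s)\to L^p(\omega^s)$.
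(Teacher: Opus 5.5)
Your low-frequency step and your treatment of $2\le p<\infty$ match the paper's argument: you dominate the sup by the $\ell^p$-sum, apply \eqref{uniform-est}, use $\ell^2\subset\ell^p$ together with the weighted Littlewood--Paley inequality, and finish with Stein--Weiss interpolation against Proposition~\ref{decay-prop}. The gap is the case $1<p<2$. There you only gesture at ``vector-valued duality'' or extrapolation, and neither supplies the ingredient that is actually needed. That ingredient is an $\ell^2$-valued inequality $\|(\sum_\ell|A_{2^\ell}\beta_{k-\ell}(D)g_\ell|^2)^{1/2}\|_{L^p(\omega^r)}\lesssim\|(\sum_\ell|g_\ell|^2)^{1/2}\|_{L^p(\omega^r)}$, with a controlled constant, for some weight $\omega^r$.

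Uniform scalar bounds for the family $\{A_{2^\ell}\}$ on $L^p(\omega)$ directly give only the $\ell^p$-valued inequality. To reach $\ell^2$ with $p<2$ you must interpolate with an $\ell^\infty$-valued inequality, and that is a bound for $\sup_\ell A_{2^\ell}$, the very operator you are trying to control. The dual route hits the same wall. After linearizing and using $|A^*_{2^\ell}h|^2\lesssim A^*_{2^\ell}(|h|^2)$, you need $\sup_\ell A_{2^\ell}$ bounded on a weighted $L^{(p'/2)'}$ space. Rubio de Francia extrapolation is also unavailable, because the hypothesis concerns the single weight $\omega$, not all weights in some $\mathcal{A}_{p_0}$. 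So your claim that ``either route should give the bound with constant independent of $k$'' is unsupported.

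The missing idea is the bootstrap used in the paper.
\begin{enumerate}
\item Let $\Lambda(N)$ be the best constant for $\sup_{|\ell|\le N}|A_{2^\ell}f|$ on $L^p(\omega^s)$. It is finite a priori, since it is a finite supremum of operators bounded on $L^p(\omega^s)$.
\item Positivity of $A_{2^\ell}$ and $|\beta_{k-\ell}(D)g|\lesssim M_{\text{HL}}g$ give the $\ell^\infty$-valued inequality on $L^p(\omega^s)$ with constant $\Lambda(N)$.
\item The hypothesis gives the $\ell^p$-valued inequality on $L^p(\omega)$ with constant $O(1)$.
\item Interpolating at $\theta=p/2$ gives the $\ell^2$-valued inequality on $L^p(\omega^r)$, where $r=s+(1-s)p/2$, with constant $\Lambda(N)^{1-p/2}$. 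The weighted square function then gives the bound for your $T_k$ on $L^p(\omega^r)$.
\item A second Stein--Weiss interpolation with the unweighted decay brings the weight back to $\omega^s$. Summing in $k$ yields $\Lambda(N)\lesssim 1+\Lambda(N)^{1-a}$ with some $a\in(0,1)$, hence $\Lambda(N)\lesssim 1$ uniformly in $N$.
\end{enumerate}
This bootstrap naturally works at the intermediate weights $\omega^r$ with $r<1$, not at $\omega$ itself. For $p<2$ the paper never proves a (nearly) uniform bound for $T_k$ on $L^p(\omega)$, which is the step (2) your plan relies on.
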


\begin{proof}
Let $1<p<2$ and $0\leq s<1$. For $N\in \mathbb{N},$ let $\Lambda(N)$ be the smallest constant such that the following inequality holds true:
\begin{align}
\label{Christ-est}
\big\|\sup_{\ell\in \mathbb{Z}: |\ell|\leq N} |A_{2^\ell}f|\big\|_{L^p(\omega^s)}\leq \Lambda(N)\, \big\|f\big\|_{L^p(\omega^s)}. 
\end{align}
Our goal is to prove that $\Lambda(N)\lesssim 1,$ uniformly in $N$. The uniform estimate \eqref{uniform-est} and \eqref{Christ-est} implies that 
\begin{align}
\label{vv-1}
\big\|\sup_{\ell\in \mathbb{Z}: |\ell|\leq N} |A_{2^\ell} \beta_{k-l}(D) g_{\ell}|\|_{L^p(\omega^s)}\leq \Lambda(N)\, \big\|\sup_{\ell\in \mathbb{Z}: |\ell|\leq N} |g_{\ell}|\big\|_{L^p(\omega^s)}.  
\end{align}
To elaborate, using the facts that the operators $A_{2^l}$ are positive and $|\beta_{k-l}(D)g_{\ell}(x)|\lesssim M_{\text{HL}}g_{\ell}(x)\lesssim M_{\text{HL}}(\sup_{|l|\leq N} g_{\ell})(x),$ we obtain
\begin{align}
&\nonumber\big\|\sup_{|\ell|\leq N}  |A_{2^\ell} \beta_{k-l}(D) g_{\ell}|   \|_{L^p(\omega^s)}\\
&\nonumber \lesssim \big\|\sup_{|\ell|\leq N}  |A_{2^\ell}( M_{\text{HL}}(\sup\limits_{|\ell|\leq N} g_{\ell}))|   \|_{L^p(\omega^s)}\\
&\overset{\eqref{Christ-est}}{\lesssim} \Lambda(N)\, \big\| M_{\text{HL}}(\sup_{|\ell|\leq N}  \left| g_{\ell}  \right| )\|_{L^p(\omega^s)}\lesssim \Lambda(N)\big\|\sup_{|\ell|\leq N}  \left| g_{\ell}  \right|\|_{L^p(\omega^s)}.\label{vv-interpol-1}
\end{align}
The last inequality uses that the Hardy--Littlewood maximal operator $M_{\text{HL}}$ maps $L^p(\omega^s)$ to itself for $\omega\in \mathcal{A}_{p}$ and $1<p<\infty.$ Moreover, the uniform estimate \eqref{uniform-est}  yields
\begin{align}
\nonumber&\Big\|\Big(\sum_{|\ell|\leq N} |A_{2^\ell} \beta_{k-l}(D) g_{\ell}|^{p}\Big)^{\frac{1}{p}}\Big\|_{L^p(\omega)}^{p}\\
\nonumber & = \sum_{|\ell|\leq N} \big\|A_{2^\ell} \beta_{k-l}(D) g_{\ell}\big\|_{L^p(\omega)}^{p}\\
&\overset{\eqref{uniform-est}}{\leq} C\, \sum_{|\ell|\leq N} \big\| \beta_{k-l}(D) g_{\ell} \big\|_{L^p(\omega)}^{p}   \lesssim \Big\|\Big(\sum_{|\ell|\leq N} |g_{\ell}|^{p}\Big)^{\frac{1}{p}}\Big\|_{L^p(\omega)}^{p}. \label{vv-interpol-2} 
\end{align}
Interpolation of \eqref{vv-interpol-1} and \eqref{vv-interpol-2} derives the following crucial vector-valued estimate
\begin{align}
\label{vv-interpol-3}
& \Big\|\Big(\sum_{|\ell|\leq N} |A_{2^\ell} \beta_{k-l}(D) g_{\ell}|^{2}\Big)^{\frac{1}{2}}\Big\|_{L^p(\omega^r)}\lesssim C \Lambda(N)^{1-\frac{p}{2}} \Big\|\Big(\sum_{|\ell|\leq N} |g_{\ell}|^{2}\Big)^{\frac{1}{2}}\bigg\|_{L^p(\omega^r)}.
\end{align}
for some $s<r<1$. Fix $f,$ applying \eqref{vv-interpol-3} with $g_\ell=\beta_{k-l}(D)f$ we further derive 
\begin{align}
\nonumber&\Big\|\sup_{\ell\in \mathbb{Z}: |\ell|\leq N} |A_{2^\ell} \beta_{k-l}(D) f|\Big\|_{L^p(\omega^r)}\\
\nonumber&\leq \Big\|\Big(\sum_{\ell\in \mathbb{Z}: |\ell|\leq N} |A_{2^\ell}\beta_{k-l}(D)f|^2\Big)^{1/2}\Big\|_{L^p(\omega^r)}\\
&\lesssim \Lambda(N)^{1-\frac{p}{2}} \Big\|\Big(\sum_{\ell\in \mathbb{Z}: |\ell|\leq N} |\widetilde{\beta}_{k-l}(D)f|^2\Big)^{1/2}\Big\|_{L^p(\omega^r)}\lesssim \Lambda(N)^{1-\frac{p}{2}} \|f\|_{L^p(\omega^r)},\label{vv-interpol-4}
\end{align}
where the final inequality follows from the boundedness of Littlewood-Paley square-function on $L^p(\omega^r)$ for $\omega\in \mathcal{A}_{p}$ and $1<p<\infty.$ A further interpolation of \eqref{vv-interpol-4} and the unweighted estimate \eqref{unweighted-decay} for the same $1 < p < 2,$ implies that there exists $\alpha=\alpha_{s}\in (0, 1)$, $\delta^{\prime}(p) >0$ such that
\begin{equation}
\label{vv-interpol-5}
\Big\|\sup_{\ell\in \mathbb{Z}: |\ell|\leq N} |A_{2^\ell} \beta_{k-l}(D) f|\Big\|_{L^p(\omega^s)}\leq 2^{-k\delta'(p)} \Lambda(N)^{1-{\alpha}} \|f\|_{L^p(\omega^s)}.   
\end{equation}
Summing \eqref{vv-interpol-5} over $k,$ we get
\begin{equation}
\label{vv-interpol-6}
\Big\|\sup_{\ell\in \mathbb{Z}: |\ell|\leq N} |A_{2^\ell} f|\Big\|_{L^p(\omega^s)}\lesssim  \Lambda(N)^{1-{\alpha}} \|f\|_{L^p(\omega^s)}.   
\end{equation}
According to our hypothesis \eqref{Christ-est}, $\Lambda(N)$ is the smallest constant such that \eqref{Christ-est} holds, therefore, \eqref{vv-interpol-6} implies that $\Lambda(N)\lesssim \Lambda(N)^{1-{\alpha}},$ thus yielding $\Lambda(N)\lesssim 1$. This completes the proof in the case $1<p<2.$  

Now we will prove the theorem for $2\leq p<\infty.$ To start with, using a union bound argument, we observe that
\begin{align}
\nonumber&\Big\|\sup_{\ell\in \mathbb{Z}: |\ell|\leq N} |A_{2^\ell} \beta_{k-l}(D) f|\Big\|_{L^p(\omega)}^{p}\\  \nonumber&\leq \Big\|\Big(\sum_{\ell\in \mathbb{Z}: |\ell|\leq N} |A_{2^\ell} \beta_{k-l}(D) f|^p \Big)^{1/p}\Big\|_{L^p(\omega)}^{p}\\
\nonumber &\leq \sum_{\ell\in \mathbb{Z}: |\ell|\leq N} \|A_{2^\ell} \beta_{k-l}(D) f\|^{p}_{L^p(\omega)}\\
\nonumber &\overset{\eqref{uniform-est}}{\lesssim} \sum_{\ell\in \mathbb{Z}: |\ell|\leq N} \|\beta_{k-l}(D) f\|^{p}_{L^p(\omega)}\lesssim \int \Big(\sum_{\ell\in \mathbb{Z}: |\ell|\leq N} |\beta_{k-l}(D) f|^{2}\Big)^{p/2}\,\omega,\ \ \text{(as}\quad 2\leq p<\infty)\nonumber\\
&\lesssim \|f\|_{L^p(\omega)}^p,\label{vv-interpol-9}
\end{align}
where we used weighted boundedness of the square-function in the final inequality. Stein--Weiss interpolation between \eqref{vv-interpol-9} and the unweighted estimate \eqref{unweighted-decay} implies that 
\begin{align}
\Big\|\sup_{\ell\in \mathbb{Z}: |\ell|\leq N} |A_{2^\ell} \beta_{k-l}(D) f|\Big\|_{L^p(\omega^s)}\lesssim 2^{-k\tilde{\delta}(p)} \|f\|_{L^p(\omega^s)},    
\end{align}
for $0\leq s<1$ with $\tilde{\delta}(p)>0.$ Summing the above estimate in $k$ we complete the proof.
\end{proof}

\begin{proof}[Proof of Theorem~\ref{lacunary}]
Let $-1<\alpha<(p-1).$ From Proposition~\ref{single} we obtain that $A_{t}$ is bounded from $L^p(|x|^\alpha)$ to itself. An immediate application of Proposition~\ref{lac-uniform} yields that $M_{\text{lac}}$ maps $L^p(|x|^\alpha)$ to itself boundedly. For $\alpha=-1,$ one can modify the arguments in \cite{JavierVega} to show that for $\alpha<0,$ the estimate  
$$\int |M_{\text{lac}}f(x)|^p\, |x|^{\alpha}\, dx\leq C \int |f(x)|^p\, |x|^{\alpha}\, dx$$
holds true whenever 
\begin{align}
\label{unoform-end}
\int |A_{2^\ell} f(x)|^p\, |x|^{\alpha}\, dx\leq C \int |f(x)|^p\,|x|^{\alpha})\, dx     
\end{align}
holds uniformly for all $\ell \in \Z,$ and then using Proposition~\ref{single}  we conclude the end-point $\alpha=-1.$
\end{proof}

\end{document}